\title{Parallelism Theorem and Derived Rules for Parallel Coherent Transformations}
\author{Thierry  Boy de la Tour}
\date{Univ. Grenoble Alpes, CNRS, Grenoble INP, LIG \\ 38000 Grenoble,
  France \\
{\small \texttt{thierry.boy-de-la-tour@imag.fr} }}
 \newtheorem{theorem}{Theorem} [section]
 \newtheorem{lemma}[theorem]{Lemma}
 \newtheorem{definition}[theorem]{Definition}
\def\clap#1{\hbox to 0pt{\hss#1\hss}}
\definecolor{lblue}{rgb}{0.4,0.4,1}
\definecolor{lgray}{rgb}{0.8,0.8,0.8}
\newcommand{\tuple}[1]{(#1)}
\newcommand{\Nat}{\mathds{N}}
\newcommand{\ensvide}{\varnothing}
\newcommand{\defeq}{\stackrel{\mathrm{\scriptscriptstyle def}}{=}}
\newcommand{\invf}[1]{#1^{-1}}
\newcommand{\aCat}{\mathcal{C}}
\newcommand{\dirtrans}[2]{\Delta(#1,#2)}
\newcommand{\dirtranspo}[2]{\Delta^{\mathrm{PO}}(#1,#2)}
\newcommand{\adt}{\gamma}
\newcommand{\anassocdt}{\delta}
\newcommand{\weakspan}{weak span}
\newcommand{\arule}{\rho}
\newcommand{\aspan}{\sigma}
\newcommand{\R}{\mathcal{R}}
\newcommand{\PO}{{\footnotesize PO}}
\newcommand{\id}[1]{\mathrm{id}_{#1}}
\newcommand{\trans}[1]{\xRightarrow{\,#1\,}}
\newcommand{\assoc}[1]{\check{#1}}
\newcommand{\M}{\mathcal{M}}
\newcommand{\Src}{\mathcal{S}}
\newcommand{\ruletrans}[1]{\mathrel{\Vdash_{#1}}}
\newcommand{\ruledir}[1]{\mathrel{\vdash_{#1}}}
\newcommand{\cubenodes}[8]{
\node (LL) at (0,0) {$#1$}; \node (LF) at (4,-1) {$#2$};
\node (LB) at (3,1) {$#3$}; \node (LR) at (7,0) {$#4$};
\node (UL) at (0,3.2) {$#5$}; \node (UF) at (4,2.2) {$#6$};
\node (UB) at (3,4.2) {$#7$}; \node (UR) at (7,3.2) {$#8$};}
\begin{document}
\maketitle
\begin{abstract}
  An Independent Parallelism Theorem is proven in the theory of
  adhesive HLR categories. It shows the bijective correspondence
  between sequential independent and parallel independent direct
  derivations in the Weak Double-Pushout framework, see
  \cite{BoydelatE19}.  The parallel derivations are expressed by means
  of Parallel Coherent Transformations (PCTs), hence without assuming
  the existence of coproducts compatible with $\M$ as in the standard
  Parallelism Theorem.  It is aslo shown that a derived rule can be
  extracted from any PCT, in the sense that to any direct derivation
  of this rule corresponds a valid PCT.
 \end{abstract}

\section{Definitions}

We use a number of definitions and results from \cite{AdamekHS04},
that we recall (and simplify) here.

A (finite) \emph{source} in a category $\aCat$ is a pair
$\Src=\tuple{A,(f_i)_{i=1}^p}$ with an object $A$ and morphisms
$f_i:A\rightarrow A_i$ for $1\leq i\leq p$ (where $p\in\Nat$); $A$ is
the domain and $(A_i)_{i=1}^p$ the codomain of $\Src$. For any
morphism $f:B\rightarrow A$ we write $\Src\circ f$ for the source
$\tuple{B,(f_i\circ f)_{i=1}^p}$. $\Src$ is a \emph{mono-source} if
for all pair $f,g:B\rightarrow A$, the equation
$\Src\circ f = \Src\circ g$ (i.e., $f_i\circ f = f_i\circ g$ for all
$1\leq i\leq p$) implies $f=g$. A mono-source $\Src$ is
\emph{extremal} if for every source $\Src'$ and epimorphism $e$ such
that $\Src=\Src'\circ e$, then $e$ is an isomorphism \cite[Definitions
10.1-11]{AdamekHS04}. The notions of \emph{sink, epi-sink} and
\emph{extremal epi-sink} are dual to these \cite[10.63]{AdamekHS04}.

A source $\tuple{A,(f_i)_{i=1}^p}$ with codomain $(B_i)_{i=1}^p$ is
\emph{natural} for a sink $\tuple{(g_i)_{i=1}^p,C}$ with domain
$(B_i)_{i=1}^p$ if $g_i\circ f_i = g_j\circ f_j$ for all
$1\leq i,j\leq p$. A \emph{limit} of $\tuple{(g_i)_{i=1}^p,C}$ is a
source $\Src$ natural for $\tuple{(g_i)_{i=1}^p,C}$ such that for all
source $\Src'$ natural for $\tuple{(g_i)_{i=1}^p,C}$ there exists a
unique morphism $f$ such that $\Src = \Src'\circ f$. A limit of
$\tuple{g_1,g_2,C}$ is called a \emph{pullback} \cite[Definitions
11.3-8]{AdamekHS04}. The notions of \emph{natural} sink,
\emph{colimit} and \emph{pushout} are dual to these
\cite[11.27-28]{AdamekHS04}.

A limit is essentially unique in the sense that, if $\Src$ is a limit
of a sink, then any natural source for this sink is a limit iff it is
of the form $\Src\circ h$ where $h$ is an isomorphism
\cite[11.7]{AdamekHS04}. Besides, every limit is an extremal
mono-source \cite[11.6]{AdamekHS04}. By duality, colimits are
essentially unique and every colimit is an extremal epi-sink
\cite[11.29]{AdamekHS04}.

If the following diagram commutes,
\begin{center}
  \begin{tikzpicture}[scale=1.5]
        \node (G) at (0,0) {$C$}; \node (L) at (0,1) {$A$};
        \node (K) at (1,1) {$B$}; \node (D) at (1,0) {$D$};
        \node (RK) at (2,1) {$E$}; \node (H) at (2,0) {$F$}; 
        \node at (0.5,0.5) {(1)}; \node at (1.5,0.5) {(2)};
        \path[<-] (K) edge (L);
        \path[->] (L) edge (G);
        \path[->] (K) edge (D);
        \path[<-] (D) edge (G);
        \path[->] (D) edge (H);
        \path[->] (K) edge (RK);
        \path[->] (RK) edge (H);
  \end{tikzpicture}
\end{center}
then the outer rectangle is a pushout whenever (1) and (2) are both
pushouts (\emph{pushout composition}), and if (1) and the outer
rectangle are pushouts then so is (2) (\emph{pushout decomposition}),
see \cite[11.10]{AdamekHS04} for the dual result.

\section{Parallel Coherent Transformations}

The material in this section is taken from \cite{BoydelatE19}, with
some modifications.

\begin{definition}\label{def-weakspan}
  A \emph{\weakspan} $\arule$ is a diagram
  $L\xleftarrow{l} K \xleftarrow{i} I \xrightarrow{r} R$ in
  $\aCat$. Given an object $G$ of $\aCat$ and a \weakspan\ $\arule$, a
  \emph{direct transformation $\adt$ of $G$ by $\arule$} is a diagram
\begin{center}
  \begin{tikzpicture}[xscale=1.8, yscale=1.5]
    \node (L) at (0,1) {$L$}; \node (K) at (1,1) {$K$};  \node (I) at
    (2,1) {$I$};  \node (R) at (3,1) {$R$}; \node (G) at (0.5,0) {$G$};
    \node (D) at (1.5,0) {$D$}; \node (H) at (2.5,0) {$H$};
    \node at (0.75,0.5) {=}; \node at (2.25,0.5) {\PO};
    \path[->] (K) edge node[fill=white, font=\footnotesize] {$l$} (L);
    \path[->] (I) edge node[fill=white, font=\footnotesize] {$i$} (K);
    \path[->] (I) edge node[fill=white, font=\footnotesize] {$r$} (R);
    \path[->] (L) edge node[fill=white, font=\footnotesize] {$m$} (G);
    \path[->] (K) edge node[fill=white, font=\footnotesize] {$k$} (D);
    \path[->] (D) edge node[fill=white, font=\footnotesize] {$f$} (G);
    \path[->] (I) edge node[fill=white, font=\footnotesize] {$k\circ i$} (D);
    \path[->] (D) edge node[fill=white, font=\footnotesize] {$g$} (H);
    \path[->] (R) edge node[fill=white, font=\footnotesize] {$n$} (H);
  \end{tikzpicture}
\end{center}
such that $f\circ k = m\circ l$ and
$\tuple{g,n,H}$ is a pushout of $\tuple{I,r,k\circ i}$; we then write
$G\trans{\adt} H$. Let
$\dirtrans{G}{\arule}$ be the set of all direct transformations of $G$
by $\arule$. For a set $\R$ of {\weakspan}s, let
$\dirtrans{G}{\R}\defeq \biguplus_{\arule\in\R}\dirtrans{G}{\arule}$.

If $\tuple{f,m,G}$ is a pushout of $\tuple{K,l,k}$, then $\adt$ is
called \emph{Weak Double-Pushout}. Let $\dirtranspo{G}{\R}$ be the set
of Weak Double-Pushouts in $\dirtrans{G}{\R}$.
\end{definition}
As $\arule$ is part of any diagram $\adt\in\dirtrans{G}{\arule}$, it
is obvious that
$\dirtrans{G}{\arule}\cap \dirtrans{G}{\arule'} = \ensvide$ whenever
$\arule\neq \arule'$. A span is of course a weak span where $I=K$ and
$i=\id{K}$, and in this case a Weak Double-Pushout is a standard
Double-Pushout diagram.

In the rest of the paper, when we refer to some weak span $\arule$,
possibly indexed by a natural number, we will also assume the objects
and morphisms $L$, $K$, $I$, $R$, $l$, $i$ and $r$, indexed by the
same number, as given in the definition of weak spans. The same scheme
will be used for direct transformations and indeed for all diagrams
given in future definitions.

\begin{definition}\label{def-coherent}
  Given an object $G$ of $\aCat$, two {\weakspan}s $\arule_1$ and
  $\arule_2$, and direct transformations $\adt_1\in
  \dirtrans{G}{\arule_1}$ and $\adt_2\in \dirtrans{G}{\arule_2}$, if
  there exist two morphisms $j_1^2: I_1\rightarrow D_2$ and $j_2^1: I_2\rightarrow
  D_1$ such that the diagram
  \begin{center}
  \begin{tikzpicture}[xscale=1.65, yscale=1.8]
    \node (L) at (0.5,1) {$L_2$}; \node (K) at (1.5,1) {$K_2$};  \node (I) at
    (2.5,1) {$I_2$};  \node (R) at (3.5,1) {$R_2$}; \node (G) at (0,0) {$G$};
    \node (D) at (2,0) {$D_2$}; \node (H) at (3,0) {$H_2$};
    \path[->] (K) edge node[fill=white, font=\footnotesize] {$l_2$} (L);
    \path[->] (I) edge node[fill=white, font=\footnotesize] {$i_2$} (K);
    \path[->] (I) edge node[fill=white, font=\footnotesize] {$r_2$} (R);
    \path[->] (L) edge node[fill=white, font=\footnotesize, near start] {$m_2$} (G);
    \path[->] (K) edge node[fill=white, font=\footnotesize] {$k_2$} (D);
    \path[->] (D) edge node[fill=white, font=\footnotesize] {$f_2$} (G);
    \path[->] (I) edge node[fill=white, font=\footnotesize] {$k_2\circ i_2$} (D);
    \path[->] (D) edge node[fill=white, font=\footnotesize] {$g_2$} (H);
    \path[->] (R) edge node[fill=white, font=\footnotesize] {$n_2$} (H);
    \node (L1) at (-0.5,1) {$L_1$}; \node (K1) at (-1.5,1) {$K_1$};  \node (I1) at
    (-2.5,1) {$I_1$};  \node (R1) at (-3.5,1) {$R_1$};
    \node (D1) at (-2,0) {$D_1$}; \node (H1) at (-3,0) {$H_1$};
    \path[->] (K1) edge node[fill=white, font=\footnotesize] {$l_1$} (L1);
    \path[->] (I1) edge node[fill=white, font=\footnotesize] {$i_1$} (K1);
    \path[->] (I1) edge node[fill=white, font=\footnotesize] {$r_1$} (R1);
    \path[->] (L1) edge node[fill=white, font=\footnotesize, near start] {$m_1$} (G);
    \path[->] (K1) edge node[fill=white, font=\footnotesize] {$k_1$} (D1);
    \path[->] (D1) edge node[fill=white, font=\footnotesize] {$f_1$} (G);
    \path[->] (I1) edge node[fill=white, font=\footnotesize] {$k_1\circ i_1$} (D1);
    \path[->] (D1) edge node[fill=white, font=\footnotesize] {$g_1$} (H1);
    \path[->] (R1) edge node[fill=white, font=\footnotesize] {$n_1$} (H1);
  \path[-] (I1) edge[draw=white, line width=3pt]  (D);
  \path[-] (I) edge[draw=white, line width=3pt]  (D1);
    \path[->,dashed] (I1) edge node[fill=white, font=\footnotesize,
    near start] {$j_1^2$} (D);
    \path[->,dashed] (I) edge node[fill=white, font=\footnotesize,
    near start] {$j_2^1$} (D1);
  \end{tikzpicture}    
  \end{center}
  commutes, i.e., $f_2\circ j_1^2 = f_1\circ k_1\circ i_1$ and
  $f_1\circ j_2^1 = f_2\circ k_2\circ i_2$, then we say that $\adt_1$
  and $\adt_2$ are \emph{parallel coherent}.

  A \emph{parallel coherent diagram for $G$} is a commuting diagram
  $\Gamma$ in $\aCat$ constituted of diagrams
  $\adt_1,\dotsc,\adt_p\in\dirtrans{G}{\R}$ for some $p\geq 1$, and
  morphisms $j_a^b:I_a\rightarrow D_b$ for all $1\leq a,b\leq p$.
\end{definition}

Note that for any $\adt\in\dirtrans{G}{\R}$, the diagram $\adt$
extended with $j=k\circ i$ is parallel coherent. For any parallel
coherent diagram $\Gamma$, it is obvious that $\adt_a$ and $\adt_b$
are parallel coherent for all $1\leq a,b\leq p$, and that
\begin{center}
  \begin{tikzpicture} [xscale=2, yscale=1]
    \node (G) at (0,0) {$G$};
    \node (D1) at (1,1) {$D_1$};
    \node (Dn) at (1,-1) {$D_p$};
    \node (Ic) at (2,0) {$I_a$};
    \node at (1,0.1) {$\vdots$};
    \path[->] (D1) edge node[fill=white, font=\footnotesize] {$f_1$} (G);
    \path[->] (Dn) edge node[fill=white, font=\footnotesize] {$f_p$} (G);
    \path[->] (Ic) edge node[fill=white, font=\footnotesize] {$j_a^1$} (D1);
    \path[->] (Ic) edge node[fill=white, font=\footnotesize] {$j_a^p$} (Dn);
  \end{tikzpicture}
\end{center}
is a sub-diagram of $\Gamma$ for all $1\leq a\leq p$, hence commutes.

We can now consider the parallel transformation of an object by
parallel coherent diagram. The principle is that anything that is
removed by some direct transformation should be removed from the
input $G$, and all right hand sides should occur in the result.

\begin{definition}\label{def-transfo}
  For any object $G$ of $\aCat$ and $\Gamma$ a parallel coherent
  diagram for $G$, a \emph{parallel coherent transformation (or PCT)
    of $G$ by $\Gamma$} is a diagram as in Figure \ref{fig-pct} where:
  \begin{figure}[t]
    \centering
      \begin{tikzpicture}[xscale=2, yscale=0.9]
  \node (G) at (0,0) {$G$};
  \node (D) at (2,0) {$C$};
  \node (H) at (4,0) {$H$};
  \node at (1,0.1) {$\vdots$};
  \node at (3,0.1) {$\vdots$};
  \node (L1) at (0,2) {$L_1$};
  \node (K1) at (1,3) {$K_1$};
  \node (D1) at (1,1) {$D_1$};
  \node (I1) at (2,3) {$I_1$};
  \node (R1) at (3,4) {$R_1$};
  \node (H1) at (3,1) {$F_1$};
  \node (Ln) at (0,-2) {$L_p$};
  \node (Kn) at (1,-3) {$K_p$};
  \node (Dn) at (1,-1) {$D_p$};
  \node (In) at (2,-3) {$I_p$};
  \node (Rn) at (3,-4) {$R_p$};
  \node (Hn) at (3,-1) {$F_p$};
  \node at (2.5,2){\PO}; \node at (2.5,-2) {\PO};
  \path[->] (K1) edge node[fill=white, font=\footnotesize] {$l_1$} (L1) ;
  \path[->] (L1) edge node[fill=white, font=\footnotesize] {$m_1$} (G);
  \path[->] (K1) edge node[fill=white, font=\footnotesize] {$k_1$} (D1);
  \path[->] (D1) edge node[fill=white, font=\footnotesize] {$f_1$} (G);
  \path[->] (I1) edge node[fill=white, font=\footnotesize] {$i_1$} (K1);
  \path[->] (I1) edge node[fill=white, font=\footnotesize] {$r_1$} (R1);
  \path[->] (R1) edge node[fill=white, font=\footnotesize] {$o_1$} (H1);
  \path[->] (D) edge node[fill=white, font=\footnotesize, near start] {$e_1$} (D1);
  \path[->] (D) edge node[fill=white, font=\footnotesize] {$s_1$} (H1);
  \path[->] (H1) edge node[fill=white, font=\footnotesize] {$h_1$} (H);
  \path[->,dashed] (I1) edge node[fill=white, font=\footnotesize] {$d_1$} (D);
  \path[->] (Kn) edge node[fill=white, font=\footnotesize] {$l_p$} (Ln) ;
  \path[->] (Ln) edge node[fill=white, font=\footnotesize] {$m_p$} (G);
  \path[->] (Kn) edge node[fill=white, font=\footnotesize] {$k_p$} (Dn);
  \path[->] (Dn) edge node[fill=white, font=\footnotesize] {$f_p$} (G);
  \path[->] (In) edge node[fill=white, font=\footnotesize] {$i_p$} (Kn);
  \path[->] (In) edge node[fill=white, font=\footnotesize] {$r_p$} (Rn);
  \path[->] (Rn) edge node[fill=white, font=\footnotesize] {$o_p$} (Hn);
  \path[->] (D) edge node[fill=white, font=\footnotesize, near start] {$e_p$} (Dn);
  \path[->] (D) edge node[fill=white, font=\footnotesize] {$s_p$} (Hn);
  \path[->] (Hn) edge node[fill=white, font=\footnotesize] {$h_p$} (H);
  \path[->,dashed] (In) edge node[fill=white, font=\footnotesize] {$d_p$} (D);
  \path[->] (I1) edge node[fill=white, font=\footnotesize] {$j^1_1$} (D1);
  \path[->] (In) edge node[fill=white, font=\footnotesize] {$j^p_p$} (Dn);
  \path[-] (In) edge[draw=white, line width=3pt]  (D1);
  \path[->] (In) edge node[fill=white, font=\footnotesize, near start] {$j_p^1$} (D1);
  \path[-] (I1) edge[draw=white, line width=3pt]  (Dn);
  \path[->] (I1) edge node[fill=white, font=\footnotesize, near start] {$j^p_1$} (Dn);
  \node at (1.5,0){\footnotesize limit}; \node at (3.5,0) {\footnotesize colimit}; 
\end{tikzpicture}
\caption{A parallel coherent transformation (or PCT)}
\label{fig-pct}
\end{figure}
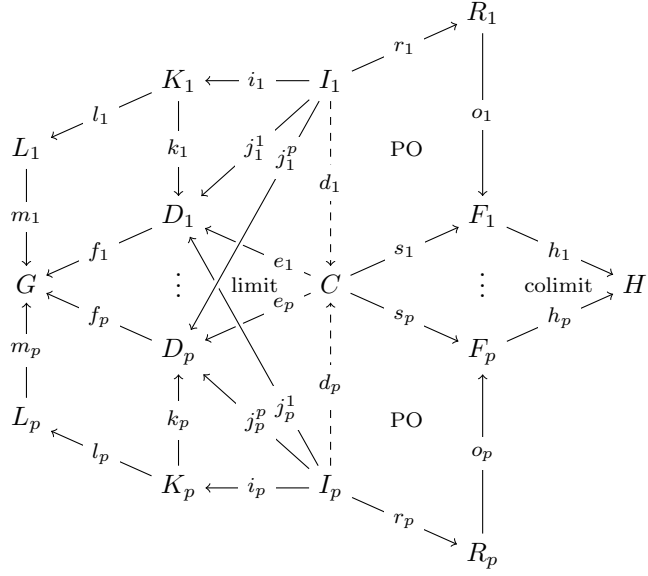

\begin{itemize}
\item $\tuple{C, e_1,\dotsc,e_p}$ is a limit of
  $\tuple{f_1,\dotsc,f_p,G}$,
\item for all $1\leq c\leq p$, $d_c:I_c\rightarrow C$ is the unique
  morphism such that for all $1\leq a\leq p$, $j_c^{a} = e_{a}\circ d_c$,
\item  for all $1\leq a\leq p$, $\tuple{s_a,o_a, F_a}$ is a pushout of
$\tuple{I_a,r_a,d_a}$,
\item $\tuple{h_1,\dotsc,h_p,H}$ is a colimit of $\tuple{C,s_1,\dotsc,s_p}$.
\end{itemize}
If such a diagram exists we write $G\trans{\Gamma} H$.
\end{definition}

See \cite{BoydelatE19} for examples.

We now assume a class of monomorphisms $\M$ of $\aCat$
that confers $\tuple{\aCat,\M}$ a structure of adhesive HLR
category. We do not give here the rather long definition of this
concept, which can be found in \cite{EhrigEPT06}. In the results below
we use the following properties of adhesive HLR categories.
\begin{enumerate}
\item $\M$ contains all isomorphisms, is closed under composition and
  under decomposition, i.e., if $g\circ f\in\M$ and $g\in\M$ then
  $f\in\M$.
\item $\M$ is closed under pushouts and pullbacks, i.e., if a square
  \begin{center}
    \begin{tikzpicture}[scale=1.5]
      \node (A) at (0,1) {$A$}; \node (B) at (1,1) {$B$};
      \node (C) at (0,0) {$C$}; \node (D) at (1,0) {$D$};
      \path[->] (A) edge node[fill=white, font=\footnotesize] {$f$} (B); 
      \path[->] (A) edge  (C); 
      \path[->] (B) edge (D); 
      \path[->] (C) edge node[fill=white, font=\footnotesize] {$f'$} (D); 
    \end{tikzpicture}
  \end{center}
  is a pushout and $f\in\M$ then $f'\in\M$; if it is a pullback and
  $f'\in\M$ then $f\in\M$.
\item Every pushout along a $\M$-morphism is a pullback.
\item The $\M$-POPB decomposition lemma: in the commuting diagram
  \begin{center}
        \begin{tikzpicture}[scale=1.5]
      \node (G) at (0,0) {$C$};
      \node (L) at (0,1) {$A$};
      \node (K) at (1,1) {$B$};
      \node (D) at (1,0) {$D$};
      \node (RK) at (2,1) {$E$};
      \node (H) at (2,0) {$F$}; 
      \path[<-] (K) edge node[fill=white, font=\footnotesize] {$u$} (L);
      \path[->] (L) edge node[fill=white, font=\footnotesize] {$v$} (G);
      \path[->] (K) edge (D);
      \path[<-] (D) edge (G);
      \path[->] (D) edge node[fill=white, font=\footnotesize] {$w$} (H);
      \path[->] (K) edge (RK);
      \path[->] (RK) edge (H);
    \end{tikzpicture}
  \end{center}
  if the outer square is a pushout, the right square a pullback,
  $w\in\M$ and ($u\in\M$ or $v\in\M$), then the left and right squares
  are both pushouts and pullbacks.
\item The cube POPB lemma: in the commuting diagram
  \begin{center}
    \begin{tikzpicture}[scale=0.5]
  \cubenodes{D'}{B'}{C'}{A'}{D}{B}{C}{A};
  \path[<-] (LL) edge  (LF);
  \path[<-] (LF) edge (LR);
  \path[->] (LR) edge (LB);
  \path[->] (LB) edge (LL);
  \path[<-] (LL) edge (UL); 
  \path[-] (LF) edge [draw=white, line width=3pt] (UF); 
  \path[<-] (LF) edge (UF); 
  \path[<-] (LR) edge (UR); 
  \path[<-] (LB) edge (UB); 
  \path[-] (UL) edge [draw=white, line width=3pt] (UF);
  \path[<-] (UL) edge (UF);
  \path[<-] (UF) edge (UR);
  \path[->] (UR) edge (UB);
  \path[->] (UB) edge (UL);
  \end{tikzpicture}
  \end{center}
  where all horizontal morphisms are in $\M$, the top face is a
  pullback and the left faces are pushouts, then the bottom face is a
  pullback iff the right faces are pushouts.
\end{enumerate}

In this theory, the morphisms in production rules are elements of
$\M$, and the direct derivations are Double Pushouts.
\begin{definition}\label{def-assoc}
  A \emph{$\M$-weak span}  $\arule$ is a weak span whose morphisms $l,i,r$
  belong to $\M$. The
  \emph{associated span $\assoc{\arule}$ of $\arule$} is the
  diagram $L\xleftarrow{l} K \xrightarrow{r'} R'$ where
  $\tuple{i',r',R'}$ is a pushout of $\tuple{I,i,r}$. 
\end{definition}

The associated span always exists and is a $\M$-span by the closure
properties of $\M$. This association is reflected in the following
equivalence of direct derivations.

\begin{lemma}\label{lm-assocspan}
  For all objects $G, H$ of $\aCat$ and $\M$-weak span $\arule$, we have
  \[\exists \adt\in\dirtranspo{G}{\arule} \text{ s.t. } G \trans{\adt}
    H \text{ iff }  \exists \anassocdt\in\dirtranspo{G}{\assoc{\arule}}
    \text{ s.t. } G \trans{\anassocdt}
    H.\] 
\end{lemma}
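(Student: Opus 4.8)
The plan is to prove both implications by explicitly constructing the relevant direct derivation from the given one, using the defining pushout of the associated span together with pushout composition and decomposition. Throughout, recall that a direct transformation in $\dirtranspo{G}{\arule}$ consists of the three squares: a "match" square that is a pushout $\tuple{f,m,G}$ of $\tuple{K,l,k}$, the trivially commuting triangle (here always satisfied), and the right pushout $\tuple{g,n,H}$ of $\tuple{I,r,k\circ i}$. For the associated span $\assoc{\arule} = (L\xleftarrow{l} K\xrightarrow{r'} R')$, a Double-Pushout in $\dirtranspo{G}{\assoc{\arule}}$ is the usual thing: a pushout of $\tuple{K,l,k}$ on the left giving $G$ with context $D$, and a pushout of $\tuple{K,r',k}$ on the right giving $H$. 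The link between the two is the pushout square $\tuple{i',r',R'}$ of $\tuple{I,i,r}$ from Definition~\ref{def-assoc}: it says $r'\circ i = i'\circ r$ and this square is a pushout, hence (since $i,r\in\M$ and $\M$ is closed under pushout) also $i',r'\in\M$ and, by property~3, this square is also a pullback.

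For the forward direction, suppose $\adt\in\dirtranspo{G}{\arule}$ with $G\trans{\adt} H$. The left square $\tuple{f,m,G}$ of $\adt$ is a pushout of $\tuple{K,l,k}$, which is exactly the left square required for a derivation via $\assoc{\arule}$, so it remains to produce the right pushout of $\tuple{K, r', k}$ yielding the same $H$. I would paste the associated-span pushout square $I\xrightarrow{r}R$, $I\xrightarrow{i}K$, $K\xrightarrow{r'}R'$, $R\xrightarrow{i'}R'$ on top of the right square $\tuple{g,n,H}$ of $\adt$ — i.e. consider the composite rectangle with top-left corner $I$, going $I\xrightarrow{i}K\xrightarrow{k}D$ along one side and $I\xrightarrow{r}R\xrightarrow{n}H$ along the other, with $D\xrightarrow{g}H$ closing it. That rectangle is the right pushout of $\adt$ (a pushout of $\tuple{I,r,k\circ i}$), and the square $\tuple{i',r',R'}$ over $I$ is a pushout; so by pushout decomposition the remaining square — with corners $K, R', D, H$, i.e. $\tuple{K,r',k}$ pushed out to $H$ via a suitable $R'\to H$ — is a pushout. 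That is precisely the right square of a DPO via $\assoc{\arule}$, and its construction uses the same $D, H$, so $G\trans{\anassocdt} H$ for the resulting $\anassocdt\in\dirtranspo{G}{\assoc{\arule}}$. (One must check $\anassocdt$ meets all the formal requirements of a direct transformation of the span $\assoc{\arule}$, but these are just the two pushout squares just produced.)

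For the converse, suppose $\anassocdt\in\dirtranspo{G}{\assoc{\arule}}$ with $G\trans{\anassocdt} H$: a left pushout of $\tuple{K,l,k}$ giving $G$ with context $D$, and a right pushout of $\tuple{K,r',k}$ giving $H$ via some $R'\xrightarrow{n'}H$. Take the same left square for $\adt$ (it is the required match pushout of $\tuple{K,l,k}$), set $k\circ i$ as the morphism $I\to D$, and set $n \defeq n'\circ i' : R\to H$ and $g$ the context-to-$H$ morphism from $\anassocdt$. Now compose the pushout $\tuple{i',r',R'}$ of $\tuple{I,i,r}$ with the right square $\tuple{K,r',k}\to H$ of $\anassocdt$: by pushout composition the outer rectangle with corners $I,R,D,H$ (sides $I\xrightarrow{r}R\xrightarrow{i'}R'\xrightarrow{n'}H$ and $I\xrightarrow{i}K\xrightarrow{k}D\xrightarrow{g}H$) is a pushout, i.e. $\tuple{g,n,H}$ is a pushout of $\tuple{I,r,k\circ i}$ — exactly the right-hand condition for $\adt\in\dirtranspo{G}{\arule}$. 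Since $f\circ k = m\circ l$ holds (it is the commutativity of the left pushout square, which we reused verbatim), $\adt$ is a valid Weak Double-Pushout and $G\trans{\adt}H$.

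The only real subtlety, and the step I expect to need the most care, is bookkeeping the morphisms $i'$ and the induced arrow $R'\to H$: one must be sure that the $R'\to H$ produced by decomposition in the forward direction genuinely makes $\tuple{K,r',k}$ a pushout onto the very same object $H$ (not merely onto an isomorphic copy), and conversely that the arrow $R\to H$ reconstructed as $n'\circ i'$ is compatible with all commutativities demanded by Definition~\ref{def-weakspan}. Both are immediate from the universal properties once the diagrams are laid out, but getting the orientation of the pasted square right — $I$ at the apex, with $r$ and $i$ as the two legs — is essential for pushout (de)composition to apply. No adhesivity beyond closure of $\M$ under pushout is actually needed here; the argument is purely about pushout pasting, with the $\M$-hypotheses only ensuring the associated span is again an $\M$-span.
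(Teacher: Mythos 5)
Your proof is correct and follows essentially the same route as the paper: the forward direction obtains the mediating morphism $n':R'\rightarrow H$ from the universal property of the pushout $\tuple{i',r',R'}$ and then applies pushout decomposition to the pasted rectangle over $I$, while the converse sets $n=n'\circ i'$ and uses pushout composition, exactly as in the paper's argument. The only cosmetic slip is the phrase ``the $R'\to H$ produced by decomposition''--- that arrow comes from the universal property of $R'$ before decomposition can be invoked, which you in fact acknowledge, so there is no real gap.
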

\begin{proof}
  Only if part. Since $r\in\M$ there exists a pushout $\tuple{g,n,H}$
  of $\tuple{I,r,k\circ i}$, then $n\circ r = g\circ k\circ i$,
  hence there is a unique morphism $n': RK\rightarrow H$ such that
  $n'\circ i'=n$ and $n'\circ r'=g\circ k$. By pushout decomposition
  $\tuple{g,n',H}$ is a pushout of $\tuple{K,r',k}$.
  \begin{center}
      \begin{tikzpicture}[scale=1.5]
        \node (G) at (0,0) {$G$};
        \node (L) at (0,1) {$L$};
        \node (K) at (1,1) {$K$};
        \node (D) at (1,0) {$D$};
        \node (I) at (1,2) {$I$};
        \node (R) at (2,2) {$R$};
        \node (RK) at (2,1) {$R'$};
        \node (H) at (2,0) {$H$}; 
        \path[->] (K) edge node[fill=white, font=\footnotesize] {$l$} (L);
        \path[->] (I) edge node[fill=white, font=\footnotesize] {$i$} (K);
        \path[->] (I) edge node[fill=white, font=\footnotesize] {$r$} (R);
        \path[->] (L) edge node[fill=white, font=\footnotesize] {$m$} (G);
        \path[->] (K) edge node[fill=white, font=\footnotesize] {$k$} (D);
        \path[->] (D) edge node[fill=white, font=\footnotesize] {$f$} (G);
        \path[->] (D) edge node[fill=white, font=\footnotesize] {$g$} (H);
        \path[->] (R) edge [bend left] node[fill=white, font=\footnotesize] {$n$} (H);
        \path[->] (R) edge node[fill=white, font=\footnotesize] {$i'$} (RK);
        \path[->] (K) edge node[fill=white, font=\footnotesize] {$r'$} (RK);
        \path[->] (RK) edge [dashed] node[fill=white, font=\footnotesize] {$n'$} (H);
      \end{tikzpicture}
  \end{center}
  If part. Since $r\in\M$ the $r'\in\M$ hence there exists a pushout
  $\tuple{g,n',H}$ of $\tuple{K,r',k}$, then by pushout composition
  $\tuple{g, n'\circ i',H}$ is a pushout of $\tuple{I,r,k\circ
    i}$. 
\end{proof}

This lemma suggests that weak spans can be analyzed with respect to
the properties of their associated spans, on which a wealth of results
is known. 

\section{Sequential and Parallel Independence}

\begin{definition}
  For any $\M$-weak span $\arule$, object $G$ and
  $\adt\in\dirtranspo{G}{\arule}$, let
  $\assoc{\adt}\in\dirtranspo{G}{\assoc{\arule}}$ be the diagram built
    from $\adt$ in the proof of Lemma \ref{lm-assocspan}.

  Given $\M$-weak spans $\arule_1$ and $\arule_2$, an object $G$ of $\aCat$
  and direct transformations $\adt_1\in\dirtranspo{G}{\arule_1}$ and
  $\adt_2\in\dirtranspo{G}{\arule_2}$, $\adt_1$ and $\adt_2$ are
  \emph{parallel independent} if $\assoc{\adt_1}$ and $\assoc{\adt_2}$
  are parallel independent, i.e., if there exist morphisms
  $j_1:L_1\rightarrow D_2$ and $j_2:L_2\rightarrow D_1$ such that
  $f_2\circ j_2= m_1$ and $f_1\circ j_2=m_2$.
  \begin{center}
  \begin{tikzpicture}[xscale=1.65, yscale=1.8]
    \node (L) at (0.5,1) {$L_2$}; \node (K) at (1.5,1) {$K_2$};  \node (I) at
    (2.5,1) {$I_2$};  \node (R) at (3.5,1) {$R_2$}; \node (G) at (0,0) {$G$};
    \node (D) at (2,0) {$D_2$}; \node (H) at (3,0) {$H_2$};
    \path[->] (K) edge node[fill=white, font=\footnotesize] {$l_2$} (L);
    \path[->] (I) edge node[fill=white, font=\footnotesize] {$i_2$} (K);
    \path[->] (I) edge node[fill=white, font=\footnotesize] {$r_2$} (R);
    \path[->] (L) edge node[fill=white, font=\footnotesize, near end] {$m_2$} (G);
    \path[->] (K) edge node[fill=white, font=\footnotesize] {$k_2$} (D);
    \path[->] (D) edge node[fill=white, font=\footnotesize] {$f_2$} (G);
    \path[->] (I) edge node[fill=white, font=\footnotesize] {$k_2\circ i_2$} (D);
    \path[->] (D) edge node[fill=white, font=\footnotesize] {$g_2$} (H);
    \path[->] (R) edge node[fill=white, font=\footnotesize] {$n_2$} (H);
    \node (L1) at (-0.5,1) {$L_1$}; \node (K1) at (-1.5,1) {$K_1$};  \node (I1) at
    (-2.5,1) {$I_1$};  \node (R1) at (-3.5,1) {$R_1$};
    \node (D1) at (-2,0) {$D_1$}; \node (H1) at (-3,0) {$H_1$};
    \path[->] (K1) edge node[fill=white, font=\footnotesize] {$l_1$} (L1);
    \path[->] (I1) edge node[fill=white, font=\footnotesize] {$i_1$} (K1);
    \path[->] (I1) edge node[fill=white, font=\footnotesize] {$r_1$} (R1);
    \path[->] (L1) edge node[fill=white, font=\footnotesize, near end] {$m_1$} (G);
    \path[->] (K1) edge node[fill=white, font=\footnotesize] {$k_1$} (D1);
    \path[->] (D1) edge node[fill=white, font=\footnotesize] {$f_1$} (G);
    \path[->] (I1) edge node[fill=white, font=\footnotesize] {$k_1\circ i_1$} (D1);
    \path[->] (D1) edge node[fill=white, font=\footnotesize] {$g_1$} (H1);
    \path[->] (R1) edge node[fill=white, font=\footnotesize] {$n_1$} (H1);
  \path[-] (L1) edge[draw=white, line width=3pt]  (D);
  \path[-] (L) edge[draw=white, line width=3pt]  (D1);
    \path[->,dashed] (L1) edge node[fill=white, font=\footnotesize] {$j_1$} (D);
    \path[->,dashed] (L) edge node[fill=white, font=\footnotesize] {$j_2$} (D1);
  \end{tikzpicture}    
  \end{center}
  Given direct transformations $\adt_1\in\dirtranspo{G}{\arule_1}$ such
  that $G\trans{\adt_1}H_1$ and
  $\adt_2\in\dirtranspo{H_1}{\arule_2}$, $\adt_1$ and $\adt_2$ are
  \emph{sequential independent} if $\assoc{\adt_1}$ and $\assoc{\adt_2}$
  are sequential independent, i.e., if there exist morphisms
  $j'_1:R'_1\rightarrow D_2$ and $j'_2:L_2\rightarrow D_1$ such that
  $f_2\circ j'_1= n_1$ and $g_1\circ j'_2=m_2$.
  \begin{center}
  \begin{tikzpicture}[xscale=1.65, yscale=1.8]
    \node (L) at (0.5,1) {$L_2$}; \node (K) at (1.5,1) {$K_2$};  \node (I) at
    (2.5,1) {$I_2$};  \node (R) at (3.5,1) {$R_2$}; \node (G) at (0,0) {$H_1$};
    \node (D) at (2,0) {$D_2$}; \node (H) at (3,0) {$H_2$};
    \path[->] (K) edge node[fill=white, font=\footnotesize] {$l_2$} (L);
    \path[->] (I) edge node[fill=white, font=\footnotesize] {$i_2$} (K);
    \path[->] (I) edge node[fill=white, font=\footnotesize] {$r_2$} (R);
    \path[->] (L) edge node[fill=white, font=\footnotesize, near end] {${m_2}$} (G);
    \path[->] (K) edge node[fill=white, font=\footnotesize] {$k_2$} (D);
    \path[->] (D) edge node[fill=white, font=\footnotesize] {$f_2$} (G);
    \path[->] (I) edge node[fill=white, font=\footnotesize] {$k_2\circ i_2$} (D);
    \path[->] (D) edge node[fill=white, font=\footnotesize] {$g_2$} (H);
    \path[->] (R) edge node[fill=white, font=\footnotesize] {$n_2$} (H);
    \node (Rp1) at (-0.5,1) {$R'_1$}; \node (K1) at (-1.5,1) {$K_1$};
    \node (I1) at (-1.5,2) {$I_1$};  \node (R1) at (-0.5,2) {$R_1$};
    \node (D1) at (-1.5,0) {$D_1$}; \node (H1) at (-2.5,0) {$G$};
    \node (L1) at (-2.5,1) {$L_1$};
    \path[->] (K1) edge node[fill=white, font=\footnotesize] {$r'_1$} (Rp1);
    \path[->] (R1) edge node[fill=white, font=\footnotesize] {$i'_1$} (Rp1);
    \path[->] (I1) edge node[fill=white, font=\footnotesize] {$i_1$} (K1);
    \path[->] (I1) edge node[fill=white, font=\footnotesize] {$r_1$} (R1);
    \path[->] (Rp1) edge node[fill=white, font=\footnotesize, near end] {$n_1$} (G);
    \path[->] (K1) edge node[fill=white, font=\footnotesize] {$k_1$} (D1);
    \path[->] (D1) edge node[fill=white, font=\footnotesize] {$g_1$} (G);
    \path[->] (K1) edge node[fill=white, font=\footnotesize] {$l_1$} (L1);
    \path[->] (D1) edge node[fill=white, font=\footnotesize] {$f_1$} (H1);
    \path[->] (L1) edge node[fill=white, font=\footnotesize] {$m_1$} (H1);
  \path[-] (L) edge[draw=white, line width=3pt]  (D1);
  \path[-] (Rp1) edge[draw=white, line width=3pt]  (D); 
    \path[->,dashed] (L) edge node[fill=white, font=\footnotesize] {$j'_2$} (D1); 
    \path[->,dashed] (Rp1) edge node[fill=white, font=\footnotesize] {$j'_1$} (D);
  \end{tikzpicture}    
  \end{center}

\end{definition}

It is obvious that if $\adt_1\in\dirtranspo{G}{\arule_1}$ and
$\adt_2\in\dirtranspo{G}{\arule_2}$ are parallel independent then they
are also parallel coherent, and therefore that there is a parallel
coherent diagram $\Gamma$ corresponding to $\adt_1$ and $\adt_2$ with
$j_1^2=j_1\circ l_1\circ i_1$ and $j_2^1 = j_2\circ l_2\circ i_2$, in
the sequel this $\Gamma$ will be written $\tuple{\adt_1,\adt_2}$.

\begin{theorem}[Independent Parallelism Theorem]\label{th-parallelism}
  For any $\M$-weak spans $\arule_1$, $\arule_2$, objects $G$, $H_1$,
  $H$ and direct transformation $\adt_1\in\dirtranspo{G}{\arule_1}$
  such that $G\trans{\adt_1}H_1$, then
  \begin{enumerate}
  \item (analysis) to any $\adt_2\in\dirtranspo{G}{\arule_2}$ such that
    $\adt_1,\adt_2$ are parallel independent and
    $G\trans{\tuple{\adt_1,\adt_2}} H$, we can associate a
    $\adt'_2\in\dirtranspo{H_1}{\arule_2}$ such that
    $G\trans{\adt_1} H_1\trans{\adt'_2} H$ is sequential
    independent,
  \item (synthesis) to any $\adt'_2\in\dirtranspo{H_1}{\arule_2}$ such that $G\trans{\adt_1}
    H_1\trans{\adt'_2} H$ is sequential independent we can
    associate a $\adt_2\in\dirtranspo{G}{\arule_2}$ such that
    $\adt_1,\adt_2$ are parallel independent and
    $G\trans{\tuple{\adt_1,\adt_2}} H$,
  \item and these two correspondences are inverse to each other up to isomorphism.
  \end{enumerate}
\end{theorem}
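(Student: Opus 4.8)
The plan is to reduce everything to the classical Parallelism Theorem for adhesive HLR categories applied to the associated spans $\assoc{\arule_1}$ and $\assoc{\arule_2}$, using Lemma~\ref{lm-assocspan} as the bridge. First I would observe that by Lemma~\ref{lm-assocspan} a Weak Double-Pushout $\adt_1\in\dirtranspo{G}{\arule_1}$ with $G\trans{\adt_1}H_1$ corresponds to $\assoc{\adt_1}\in\dirtranspo{G}{\assoc{\arule_1}}$ with the same $G\trans{\assoc{\adt_1}}H_1$, and similarly for $\adt_2$; and parallel (resp.\ sequential) independence of $\adt_1,\adt_2$ is \emph{defined} to be that of $\assoc{\adt_1},\assoc{\adt_2}$, so the independence hypotheses transfer verbatim. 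The only genuinely new content of the theorem is therefore that, for the associated spans, the parallel rule/derivation constructed by the classical theorem can be re-expressed as a PCT in the sense of Definition~\ref{def-transfo} with $p=2$, and conversely. So the technical heart is to identify the PCT data $\tuple{C,e_1,e_2}$, $d_1,d_2$, $\tuple{s_a,o_a,F_a}$, $\tuple{h_1,h_2,H}$ with the pushout-complement/pushout data of a classical double pushout along the coproduct rule — but \emph{without} invoking coproducts.

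Second, for the synthesis direction I would take the sequential independent derivation $G\trans{\adt_1}H_1\trans{\adt'_2}H$, pass to $\assoc{\adt_1},\assoc{\adt'_2}$, and run the standard construction: sequential independence gives $j'_1:R'_1\to D_2$ and $j'_2:L_2\to D_1$, and the standard theorem produces an object $C$ obtained as a pullback (equivalently, the object $D$ obtained by "subtracting both matches from $G$"). Concretely I expect $C$ to be the pullback of $f_1:D_1\to G$ and $f_2':D_2'\to G$ where $D_2'$ is the pushout complement for $\adt_2$ directly in $G$ obtained via $j'_2$; the $\M$-POPB decomposition lemma and the cube POPB lemma (properties 4 and 5) are exactly the tools that show the relevant squares are simultaneously pushouts and pullbacks, so that $\tuple{C,e_1,e_2}$ is a limit of $\tuple{f_1,f_2,G}$ and each face needed to build $F_a$ and $H$ as pushout/colimit is available. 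I would then check that the morphisms $d_c:I_c\to C$ exist and are unique by the limit property (using that $j_c^1,j_c^2$ are compatible over $G$, which is the commuting sub-diagram noted after Definition~\ref{def-coherent}), that $\tuple{s_a,o_a,F_a}$ as defined coincides up to iso with $F_a$ from the sequential construction, and that the colimit $H$ agrees with the result of the classical theorem — here pushout decomposition and the essential uniqueness of (co)limits do the bookkeeping.

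Third, the analysis direction is the mirror image: starting from the PCT $G\trans{\tuple{\adt_1,\adt_2}}H$ with $\adt_1,\adt_2$ parallel independent, I would use parallel independence ($j_1:L_1\to D_2$, $j_2:L_2\to D_1$) together with the PCT data to reconstruct $\adt'_2\in\dirtranspo{H_1}{\arule_2}$: the match $m'_2:L_2\to H_1$ is $g_1\circ j_2$ (composed through $\assoc{\arule_1}$), and the pushout complement $D_2$ for $\adt'_2$ in $H_1$ is obtained by a pushout-decomposition argument from the PCT's limit/pushout squares, with $\M$-membership guaranteed by property~2 (closure of $\M$ under pushouts and pullbacks) since all of $l_a,i_a,r_a\in\M$. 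Then one verifies sequential independence of the resulting two-step derivation by exhibiting $j'_1,j'_2$ from the PCT morphisms $d_1,d_2,e_1,e_2$.

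Finally, for part~3 I would show the two constructions are mutually inverse up to isomorphism by tracking the intermediate objects: both directions, when composed, rebuild the same limit $C$ (unique up to unique iso), the same pushout complements $D_a$ and $F_a$ (pushout complements along $\M$-morphisms are unique up to iso in adhesive HLR categories, by property~3 turning the pushouts into pullbacks), and the same colimit $H$; essential uniqueness of limits and colimits then forces the round-trip to be an isomorphism of diagrams. The main obstacle I anticipate is the one place where the PCT formulation really diverges from the textbook proof: in the classical Parallelism Theorem the object "$D$" is the common pushout complement of the coproduct match, whereas here $C$ is defined purely as a \emph{limit} of the $f_a$'s with no coproduct in sight, so I must show by hand — via the cube POPB lemma (property~5), whose hypotheses (horizontal morphisms in $\M$, top face a pullback, left faces pushouts) are tailored for exactly this — that this limit $C$ carries pushout squares $\tuple{C,e_a}$ over each $D_a$, i.e.\ that "$D_a$ is a pushout of $C$ along the part of $\arule_a$". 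That verification, rather than the independence bookkeeping, is where the real work lies.
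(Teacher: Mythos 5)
Your central strategy---deduce the theorem from the classical Parallelism Theorem applied to the associated spans $\assoc{\arule_1},\assoc{\arule_2}$---cannot work under the paper's hypotheses. The classical theorem, and indeed the very formation of the parallel rule and of the parallel direct derivation it speaks about, requires binary coproducts compatible with $\M$ (i.e.\ $f+g\in\M$ whenever $f,g\in\M$); the explicit point of Theorem~\ref{th-parallelism}, stated right after its proof, is that this assumption is dropped, the coproduct being replaced by the limit/colimit construction of the PCT. So there is no classical statement available to reduce to. Moreover, even if one added the coproduct hypothesis, the conclusion of the theorem is about $G\trans{\tuple{\adt_1,\adt_2}}H$, a PCT in the sense of Definition~\ref{def-transfo}, so you would still need a bridge of the form ``parallel derivation by the coproduct rule iff PCT by $\tuple{\adt_1,\adt_2}$''; this is exactly the step you call the technical heart, and your sketch gives no construction for it---asserting that properties~4 and~5 are ``exactly the tools'' is not an argument. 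Carrying out that bridge would amount to redoing the direct proof, so the reduction buys nothing and, as stated, leaves the main content unproven.

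For comparison, the paper argues directly, with $p=2$ so that the limit $C$ is simply a pullback. In the analysis direction, parallel independence together with $f_1,f_2\in\M$ (hence monomorphisms) forces $j_1^2=j_1\circ l_1\circ i_1$ and $j_2^1=j_2\circ l_2\circ i_2$; the pullback property of $C$ yields $d'_1,d'_2$ with $d_a=d'_a\circ i_a$, and then pushout composition/decomposition plus the $\M$-POPB decomposition lemma (property~4---the cube lemma, property~5, is not used here at all; it is needed only for Lemma~\ref{lm-shift-limit} in the derived-rule section) produce the pushouts assembling $\adt'_2\in\dirtranspo{H_1}{\arule_2}$ with $m'_2=g_1\circ j_2$, $k'_2=s_1\circ d'_2$ and context $F_1$. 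In the synthesis direction the paper builds $C$ as the pullback of $g_1$ and $f'_2$ over $H_1$ (not over $G$ via a pre-existing pushout complement $D_2$, as you propose: that $D_2$ is part of what must be constructed, and the paper obtains it as a pushout of $d'_1$ along $l_1$, thereby avoiding any appeal to existence of pushout complements), then shows by decomposition that the square over $G$ is a pushout and, being along $e_1\in\M$, a pullback, so that $\tuple{C,e_1,e_2}$ is the required limit; the remaining PCT data follow by further pushout (de)compositions. Your uniqueness argument for part~3 is in the right spirit (monomorphisms in $\M$ plus uniqueness of pullbacks and pushouts), but it only becomes meaningful once parts~1 and~2 are established by an argument that does not presuppose coproducts.
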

\begin{proof}
  \begin{enumerate}
  \item We consider a direct transformation $G\trans{\adt_2} H_2$ with
    morphisms $j_1: L_1\rightarrow D_2$, $j_2: L_2\rightarrow D_1$
    such that $f_2\circ j_1=m_1$, $f_1\circ j_2=m_2$, and a parallel
    coherent transformation $G\trans{\tuple{\adt_1,\adt_2}} H$ with 
    morphisms $j_1^2: I_1\rightarrow D_2$, $j^1_2: I_2\rightarrow D_1$
    such that $f_2\circ j^2_1=f_1\circ k_1\circ i_1$, $f_1\circ
    j^1_2=f_2\circ k_2\circ i_2$. Since pushout squares commute, we
    get $f_2\circ j^2_1= m_1\circ l_1\circ i_1 = f_2\circ j_1 \circ
    l_1\circ i_1$, and since $\M$-morphisms are stable by pushouts we
    have $f_2\in\M$, hence $f_2$ is a monomorphism so that $j^2_1= j_1 \circ
    l_1\circ i_1$. Symmetrically we get $f_1\in\M$ and $j^1_2= j_2 \circ
    l_2\circ i_2$; hence these two morphisms need not be depicted in
    Figure~\ref{fig-analysis}. 

    \begin{figure}[t]
      \centering
    \begin{tikzpicture}[scale=1.9]
        \node (G) at (0,0) {$G$};
        \node (D) at (1,-1) {$C$};
        \node (H) at (2,-2) {$H$};
        \node (L1) at (0,1) {$L_1$};
        \node (K1) at (1,1) {$K_1$};
        \node (D1) at (1,0) {$D_1$};
        \node (H1) at (2,0) {$H_1$};
        \node (I1) at (1,2) {$I_1$};
        \node (R1) at (2,2) {$R_1$}; 
        \node (Rp1) at (2,1) {$R'_1$}; 
        \node (Hp1) at (2,-1) {$F_1$}; 
        \node (L2) at (-1,0) {$L_2$};
        \node (K2) at (-1,-1) {$K_2$};
        \node (D2) at (0,-1) {$D_2$};
        \node (H2) at (0,-2) {$H_2$};
        \node (I2) at (-2,-1) {$I_2$};
        \node (R2) at (-2,-2) {$R_2$};
        \node (Hp2) at (1,-2) {$F_2$};
        \node at (1.6,1.5) {(1)};
        \node at (0.5,-0.5) {(2)}; \node at (1.6,-0.5) {(3)}; 
        \path[->] (K1) edge node[fill=white, font=\footnotesize] {$l_1$} (L1);
        \path[->] (I1) edge node[fill=white, font=\footnotesize] {$i_1$} (K1);
        \path[->] (I1) edge node[fill=white, font=\footnotesize] {$r_1$} (R1);
        \path[->] (L1) edge node[fill=white, font=\footnotesize, near start] {$m_1$} (G);
        \path[->] (K1) edge node[fill=white, font=\footnotesize] {$k_1$} (D1);
        \path[->] (D1) edge node[fill=white, font=\footnotesize] {$f_1$} (G);
        \path[->] (D)  edge node[fill=white, font=\footnotesize] {$s_1$} (Hp1);
        \path[->] (D1) edge node[fill=white, font=\footnotesize] {$g_1$} (H1);
        \path[->] (R1) edge node[fill=white, font=\footnotesize] {$i'_1$} (Rp1); 
        \path[->] (K1) edge node[fill=white, font=\footnotesize, near start] {$r'_1$} (Rp1); 
        \path[->] (Rp1) edge node[fill=white, font=\footnotesize] {$n_1$} (H1); 
        \path[->] (R1) edge [bend left = 40] node[fill=white, font=\footnotesize] {$o_1$} (Hp1); 
        \path[->] (Rp1) edge [dashed, bend left] node[fill=white, font=\footnotesize] {$j'_1$} (Hp1); 
        \path[->] (D)  edge node[fill=white, font=\footnotesize] {$e_1$} (D1);
        \path[->] (Hp1) edge node[fill=white, font=\footnotesize] {$h_1$} (H); 
        \path[->] (Hp1) edge [dashed] node[fill=white, font=\footnotesize] {$f'_2$} (H1); 
        \path[->] (L1) edge [bend right] node[fill=white,
        font=\footnotesize, near end] {$j_1$} (D2);
        \path[->] (K1) edge [bend left, dashed] node[fill=white,
        font=\footnotesize, near start] {$d'_1$} (D);
        \path[->] (I1) edge [bend left = 40, dashed] node[fill=white,
        font=\footnotesize] {$d_1$} (D);
        \path[->] (K2) edge node[fill=white, font=\footnotesize] {$l_2$} (L2);
        \path[->] (I2) edge node[fill=white, font=\footnotesize] {$i_2$} (K2);
        \path[->] (I2) edge node[fill=white, font=\footnotesize] {$r_2$} (R2);
        \path[->] (L2) edge node[fill=white, font=\footnotesize] {$m_2$} (G);
        \path[->] (K2) edge node[fill=white, font=\footnotesize] {$k_2$} (D2);
        \path[->] (D2) edge node[fill=white, font=\footnotesize] {$f_2$} (G);
        \path[->] (D)  edge node[fill=white, font=\footnotesize] {$s_2$} (Hp2);
        \path[->] (D2) edge node[fill=white, font=\footnotesize] {$g_2$} (H2);
        \path[->] (R2) edge node[fill=white, font=\footnotesize] {$n_2$} (H2); 
        \path[->] (R2) edge [bend right] node[fill=white, font=\footnotesize] {$o_2$} (Hp2); 
        \path[->] (D)  edge node[fill=white, font=\footnotesize] {$e_2$} (D2);
        \path[->] (Hp2) edge node[fill=white, font=\footnotesize] {$h_2$} (H);
        \path[-] (L2) edge[draw=white, line width=3pt, bend left]  (D1);
        \path[->] (L2) edge [bend left] node[fill=white,
        font=\footnotesize, near end] {$j_2$} (D1);
        \path[->] (K2) edge [bend right, dashed] node[fill=white,
        font=\footnotesize, near start] {$d'_2$} (D);
        \path[->] (I2) edge [bend right = 40, dashed] node[fill=white,
        font=\footnotesize] {$d_2$} (D);
    \end{tikzpicture}
    \caption{Analysis}\label{fig-analysis}
    \end{figure}
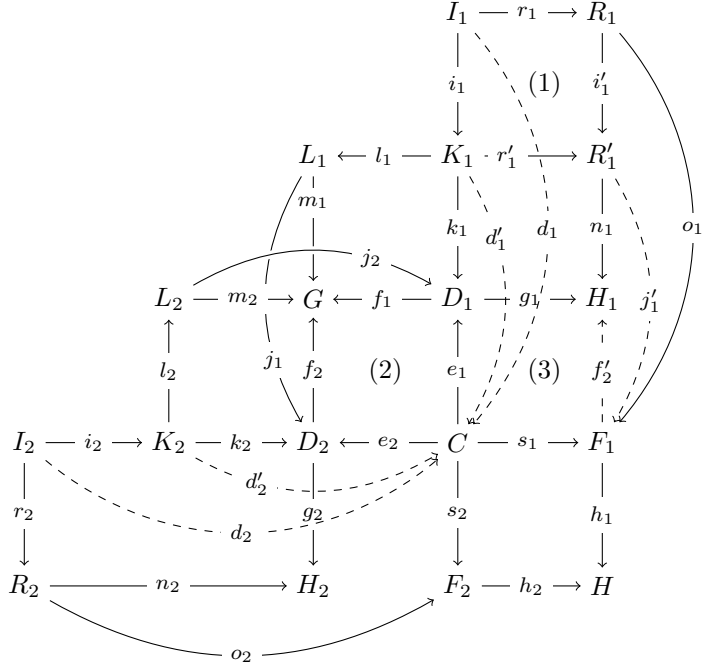
    Since (2) is a pullback and $f_2\circ j_1\circ l_1 = m_1\circ l_1 =
  f_1\circ k_1$, then there is a (unique) morphism $d'_1:K_1\rightarrow
  C$ such that $e_1\circ d'_1=k_1$ and $e_2\circ d'_1 = j_1\circ l_1$.
  By definition $d_1$ is the unique morphism such that $e_1\circ d_1=
  k_1\circ i_1$ and $e_2\circ d_1 = j^2_1 = j_1\circ l_1\circ i_1$. But
  $e_1\circ d'_1\circ i_1 = k_1\circ i_1$ and $e_2\circ
  d'_1\circ i_1 = j_1\circ l_1\circ i_1$, hence $d_1 =
  d'_1 \circ i_1$. Symmetrically we get a morphism $d'_2:K_2\rightarrow
  C$ such that $e_1\circ d'_2= j_2\circ l_2$, $e_2\circ d'_2 = k_2$
  and $d_2 = d'_2 \circ i_2$.

  By definition of $G\trans{\tuple{\adt_1,\adt_2}} H$ the square
  $(1)+(5)$ below is a pushout, hence $s_1\circ d'_1\circ i_1 =
  o_1\circ r_1$, and since (1) is also a pushout then there exists
  $j'_1: R'_1\rightarrow F_1$ such that $s_1\circ d'_1 = j'_1\circ
  r'_1$ and $o_1=j'_1\circ i'_1$, hence by decomposition (5) is also a pushout.
  \begin{center}
        \begin{tikzpicture}[yscale=1.5,xscale=2]
      \node (R1) at (0,0) {$R_1$};
      \node (I1) at (0,1) {$I_1$};
      \node (K1) at (1,1) {$K_1$};
      \node (Rp1) at (1,0) {$R'_1$};
      \node (Dp) at (2,1) {$C$}; 
      \node (Hp1) at (2,0) {$F_1$}; 
      \node (D1) at (3,1) {$D_1$}; 
      \node (H1) at (3,0) {$H_1$}; \node at (0.5,0.5) {(1)}; 
      \node at (1.5,0.5) {(5)}; \node at (2.5,0.5) {(3)}; 
      \path[->] (I1) edge node[fill=white, font=\footnotesize] {$i_1$} (K1);
      \path[->] (I1) edge node[fill=white, font=\footnotesize] {$r_1$} (R1);
      \path[->] (K1) edge node[fill=white, font=\footnotesize] {$r'_1$} (Rp1);
      \path[->] (R1) edge node[fill=white, font=\footnotesize] {$i'_1$} (Rp1);
      \path[->] (K1) edge node[fill=white, font=\footnotesize] {$d'_1$} (Dp);
      \path[->] (Dp) edge node[fill=white, font=\footnotesize] {$s_1$} (Hp1);
      \path[->, dashed] (Rp1) edge node[fill=white, font=\footnotesize] {$j'_1$} (Hp1);
      \path[->] (I1) edge [bend left] node[fill=white, font=\footnotesize] {$d_1$} (Dp);
      \path[->] (R1) edge [bend right] node[fill=white, font=\footnotesize] {$o_1$} (Hp1);
      \path[->] (Dp) edge node[fill=white, font=\footnotesize] {$e_1$} (D1);
      \path[->] (D1) edge node[fill=white, font=\footnotesize] {$g_1$} (H1);
      \path[->, dashed] (Hp1) edge node[fill=white, font=\footnotesize] {$f'_2$} (H1);
      \path[-] (K1) edge [bend left, draw=white, line width=3pt] (D1);
      \path[-] (Rp1) edge [bend right, draw=white, line width=3pt] (H1);
      \path[->] (K1) edge [bend left] node[fill=white, font=\footnotesize] {$k_1$} (D1);
      \path[->] (Rp1) edge [bend right] node[fill=white, font=\footnotesize] {$n_1$} (H1);
    \end{tikzpicture}
  \end{center}
  Similarly the square (5)+(3) is a pushout, hence $g_1\circ e_1\circ
  d'_1= n_1\circ r'_1$ and there exists $f'_2:F_1\rightarrow H_1$
  such that $g_1\circ e_1=f'_2\circ s_1$ and $n_1 = f'_2\circ j'_1$,
  and by decomposition (3) is a pushout. In the diagram
  \begin{center}
    \begin{tikzpicture}[yscale=1.5,xscale=2]
      \node (G) at (0,0) {$L_2$};
      \node (L) at (0,1) {$K_2$};
      \node (K) at (1,1) {$C$};
      \node (D) at (1,0) {$D_1$};
      \node (RK) at (2,1) {$D_2$};
      \node (H) at (2,0) {$G$};
      \node at (0.5,0.5) {(6)}; \node at (1.5,0.5) {(2)};
      \path[<-] (K) edge node[fill=white, font=\footnotesize] {$d'_2$} (L);
      \path[->] (L) edge node[fill=white, font=\footnotesize] {$l_2$} (G);
      \path[->] (K) edge node[fill=white, font=\footnotesize] {$e_1$} (D);
      \path[<-] (D) edge node[fill=white, font=\footnotesize] {$j_2$} (G);
      \path[->] (D) edge node[fill=white, font=\footnotesize] {$f_1$} (H);
      \path[->] (K) edge node[fill=white, font=\footnotesize] {$e_2$} (RK);
      \path[->] (RK) edge node[fill=white, font=\footnotesize] {$f_2$} (H);
      \path[->] (L) edge [bend left] node[fill=white, font=\footnotesize] {$k_2$} (RK);
      \path[->] (G) edge [bend right] node[fill=white, font=\footnotesize] {$m_2$} (H);
    \end{tikzpicture}
  \end{center}
  the square $(6)+(2)$ is a pushout and (2) is a pullback, hence
  by the $\M$-POPB decomposition lemma, (6) is a
  pushout. By composition we deduce that (6)+(3) is a pushout, and
  similarly for the right pushout of the following
  $\adt'_2\in \dirtranspo{H_1}{\arule_2}$ where $m'_2 = g_1\circ j_2$,
  $k'_2 = s_1\circ d'_2$ and $n'_2=h_2\circ o_2$. 
  \begin{center}
  \begin{tikzpicture}[xscale=1.65, yscale=1.8]
    \node (L) at (0.5,1) {$L_2$}; \node (K) at (1.5,1) {$K_2$};  \node (I) at
    (2.5,1) {$I_2$};  \node (R) at (3.5,1) {$R_2$}; \node (G) at (0,0) {$H_1$};
    \node (D) at (2,0) {$F_1$}; \node (H) at (3,0) {$H$};
    \path[->] (K) edge node[fill=white, font=\footnotesize] {$l_2$} (L);
    \path[->] (I) edge node[fill=white, font=\footnotesize] {$i_2$} (K);
    \path[->] (I) edge node[fill=white, font=\footnotesize] {$r_2$} (R);
    \path[->] (L) edge node[right, font=\footnotesize, near end] {${m'_2}$} (G);
    \path[->] (K) edge node[fill=white, font=\footnotesize] {$k'_2$} (D);
    \path[->] (D) edge node[fill=white, font=\footnotesize] {$f'_2$} (G);
    \path[->] (I) edge node[fill=white, font=\footnotesize] {$k'_2\circ i_2$} (D);
    \path[->] (D) edge node[fill=white, font=\footnotesize] {$h_1$} (H);
    \path[->] (R) edge node[fill=white, font=\footnotesize] {$n'_2$} (H);
    \node (L1) at (-0.5,1) {$R'_1$}; \node (K1) at (-1.5,1) {$K_1$};
    \node (D1) at (-1.5,0) {$D_1$}; 
    \path[->] (K1) edge node[fill=white, font=\footnotesize] {$r'_1$} (L1);
    \path[->] (L1) edge node[left, font=\footnotesize, near end] {$n_1$} (G);
    \path[->] (K1) edge node[fill=white, font=\footnotesize] {$k_1$} (D1);
    \path[->] (D1) edge node[fill=white, font=\footnotesize] {$g_1$} (G);
  \path[-] (L) edge[draw=white, line width=3pt]  (D1);
    \path[->] (L) edge node[fill=white, font=\footnotesize] {$j_2$} (D1); 
  \path[-] (L1) edge[draw=white, line width=3pt]  (D); 
    \path[->] (L1) edge node[fill=white, font=\footnotesize] {$j'_1$} (D);
  \end{tikzpicture}    
  \end{center}
  Here, $\adt_1$ is only partially depicted. We have $f'_2\circ j'_1 =
  n_1$, hence $G\trans{\adt_1} H_1\trans{\adt'_2} H$ is sequential
  independent. 
\item We consider a direct transformation $G\trans{\adt'_2} H$ as
  above, with morphisms $j'_1: R'_1\rightarrow F_1$ and
  $j_2: L_2\rightarrow D_1$ such that $f'_2\circ j'_1=n_1$ and
  $g_1\circ j_2=m'_2$. Since $l_2\in\M$ then $f'_2\in\M$ hence there
  exists $\tuple{C,e_1,s_1}$ such that the square (3) in
  Figure~\ref{fig-synthesis}  is a pullback, so that $e_1\in\M$ as
  well. 
  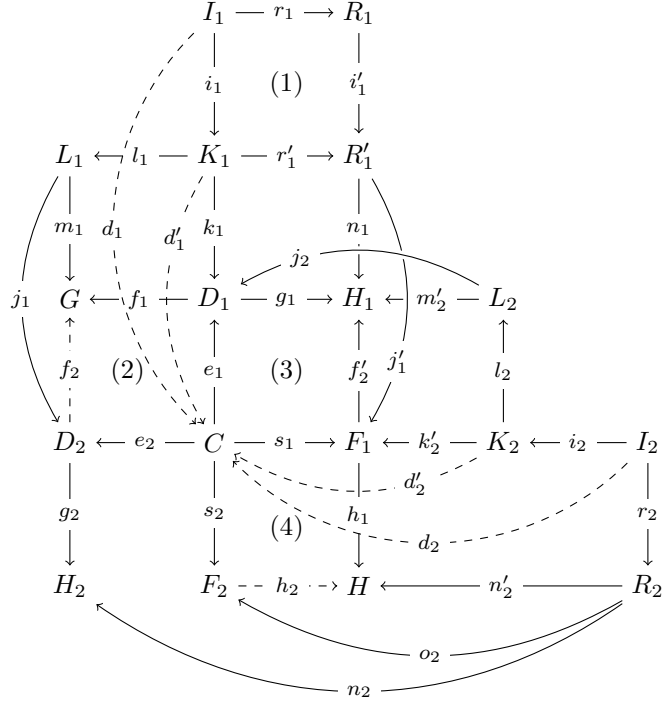
\begin{figure}[t]
    \centering
    \begin{tikzpicture}[scale=1.9]
        \node (G) at (0,0) {$G$};
        \node (D) at (1,-1) {$C$};
        \node (H) at (2,-2) {$H$};
        \node (L1) at (0,1) {$L_1$};
        \node (K1) at (1,1) {$K_1$};
        \node (D1) at (1,0) {$D_1$};
        \node (H1) at (2,0) {$H_1$};
        \node (I1) at (1,2) {$I_1$};
        \node (R1) at (2,2) {$R_1$}; 
        \node (Rp1) at (2,1) {$R'_1$}; 
        \node (Hp1) at (2,-1) {$F_1$}; 
        \node (L2) at (3,0) {$L_2$};
        \node (K2) at (3,-1) {$K_2$};
        \node (D2) at (0,-1) {$D_2$};
        \node (H2) at (0,-2) {$H_2$};
        \node (I2) at (4,-1) {$I_2$};
        \node (R2) at (4,-2) {$R_2$};
        \node (Hp2) at (1,-2) {$F_2$};
        \node at (1.5,1.5) {(1)}; \node at (0.4,-0.5) {(2)};
        \node at (1.5,-0.5) {(3)}; \node at (1.5,-1.6) {(4)}; 
        \path[->] (K1) edge node[fill=white, font=\footnotesize] {$l_1$} (L1);
        \path[->] (I1) edge node[fill=white, font=\footnotesize] {$i_1$} (K1);
        \path[->] (I1) edge node[fill=white, font=\footnotesize] {$r_1$} (R1);
        \path[->] (L1) edge node[fill=white, font=\footnotesize] {$m_1$} (G);
        \path[->] (K1) edge node[fill=white, font=\footnotesize] {$k_1$} (D1);
        \path[->] (D1) edge node[fill=white, font=\footnotesize] {$f_1$} (G);
        \path[->] (D)  edge node[fill=white, font=\footnotesize] {$s_1$} (Hp1);
        \path[->] (D1) edge node[fill=white, font=\footnotesize] {$g_1$} (H1);
        \path[->] (R1) edge node[fill=white, font=\footnotesize] {$i'_1$} (Rp1); 
        \path[->] (K1) edge node[fill=white, font=\footnotesize] {$r'_1$} (Rp1); 
        \path[->] (Rp1) edge node[fill=white, font=\footnotesize] {$n_1$} (H1); 
        \path[->] (D)  edge node[fill=white, font=\footnotesize] {$e_1$} (D1);
        \path[->] (Hp1) edge node[fill=white, font=\footnotesize] {$h_1$} (H); 
        \path[->] (Hp1) edge node[fill=white, font=\footnotesize] {$f'_2$} (H1); 
        \path[->] (R2) edge node[fill=white, font=\footnotesize] {$n'_2$} (H); 
        \path[->] (L1) edge [bend right] node[fill=white, font=\footnotesize] {$j_1$} (D2);
        \path[->] (K1) edge [bend right, dashed] node[fill=white,
        font=\footnotesize, near start] {$d'_1$} (D);
        \path[->] (I1) edge [bend right = 45, dashed] node[fill=white, font=\footnotesize] {$d_1$} (D);
        \path[->] (K2) edge node[fill=white, font=\footnotesize] {$l_2$} (L2);
        \path[->] (I2) edge node[fill=white, font=\footnotesize] {$i_2$} (K2);
        \path[->] (I2) edge node[fill=white, font=\footnotesize] {$r_2$} (R2);
        \path[->] (L2) edge node[fill=white, font=\footnotesize] {$m'_2$} (H1);
        \path[-] (Rp1) edge [bend left, draw=white, line width=3pt] (Hp1); 
        \path[->] (Rp1) edge [bend left] node[fill=white, font=\footnotesize, near end] {$j'_1$} (Hp1); 
        \path[->] (K2) edge node[fill=white, font=\footnotesize] {$k'_2$} (Hp1);
        \path[->,dashed] (D2) edge node[fill=white, font=\footnotesize] {$f_2$} (G);
        \path[->] (D)  edge node[fill=white, font=\footnotesize] {$s_2$} (Hp2);
        \path[->] (D2) edge node[fill=white, font=\footnotesize] {$g_2$} (H2);
        \path[->,bend left=35] (R2) edge node[fill=white, font=\footnotesize] {$n_2$} (H2); 
        \path[->] (R2) edge [bend left] node[fill=white, font=\footnotesize] {$o_2$} (Hp2); 
        \path[->] (D)  edge node[fill=white, font=\footnotesize] {$e_2$} (D2);
        \path[->,dashed] (Hp2) edge node[fill=white, font=\footnotesize] {$h_2$} (H);
        \path[-] (L2) edge[draw=white, line width=3pt, bend right]  (D1);
        \path[->] (L2) edge [bend right] node[fill=white, font=\footnotesize, near end] {$j_2$} (D1); 
        \path[->] (K2) edge [bend left, dashed] node[fill=white,
        font=\footnotesize, near start] {$d'_2$} (D);
        \path[->] (I2) edge [bend left = 45, dashed] node[fill=white,font=\footnotesize] {$d_2$} (D);
    \end{tikzpicture}
    \caption{Synthesis}\label{fig-synthesis}
  \end{figure}

  We have $g_1\circ k_1 = n_1\circ r'_1 = f'_2\circ j'_1\circ r'_1$,
  hence there exists a unique $d'_1:K_1\rightarrow C$ such that
  $e_1\circ d'_1 = k_1$ and $s_1\circ d'_1 = j'_1\circ
  r'_1$. Similarly, we have $f'_2\circ k'_2 = m'_2\circ l_2 = g_1\circ
  j_2\circ l_2$, hence there exists a unique $d'_2:K_2\rightarrow C$ such that
  $e_1\circ d'_2 = j_2\circ l_2$ and $s_1\circ d'_2 = k'_2$.

  Since $l_1\in\M$ then there exists $\tuple{j_1,e_2,D_2}$ such that
  the square (7) in the diagram below is a pushout.
  \begin{center}
    \begin{tikzpicture}[yscale=1.5,xscale=2]
      \node (G) at (0,0) {$L_1$};
      \node (L) at (0,1) {$K_1$};
      \node (K) at (1,1) {$C$};
      \node (D) at (1,0) {$D_2$};
      \node (RK) at (2,1) {$D_1$};
      \node (H) at (2,0) {$G$};
      \node at (0.5,0.5) {(7)}; \node at (1.5,0.5) {(2)};
      \path[<-] (K) edge node[fill=white, font=\footnotesize] {$d'_1$} (L);
      \path[->] (L) edge node[fill=white, font=\footnotesize] {$l_1$} (G);
      \path[->] (K) edge node[fill=white, font=\footnotesize] {$e_2$} (D);
      \path[<-] (D) edge node[fill=white, font=\footnotesize] {$j_1$} (G);
      \path[->,dashed] (D) edge node[fill=white, font=\footnotesize] {$f_2$} (H);
      \path[->] (K) edge node[fill=white, font=\footnotesize] {$e_1$} (RK);
      \path[->] (RK) edge node[fill=white, font=\footnotesize] {$f_1$} (H);
      \path[->] (L) edge [bend left] node[fill=white, font=\footnotesize] {$k_1$} (RK);
      \path[->] (G) edge [bend right] node[fill=white, font=\footnotesize] {$m_1$} (H);
    \end{tikzpicture}
  \end{center}
  We have $m_1\circ l_1 = f_1\circ k_1 = f_1\circ e_1\circ d'_1$,
  hence there exists a unique $f_2:D_2\rightarrow G$ such that
  $f_2\circ j_1=m_1$ and $f_2\circ e_2 = f_1\circ e_1$. Since
  $(7)+(2)$ is a pushout then by decomposition (2) is a pushout, and
  since $e_1\in\M$, then it is also a pullback.

  In the diagram below
  \begin{center}
    \begin{tikzpicture}[yscale=1.5,xscale=2]
      \node (G) at (0,0) {$L_2$};
      \node (L) at (0,1) {$K_2$};
      \node (K) at (1,1) {$C$};
      \node (D) at (1,0) {$D_1$};
      \node (RK) at (2,1) {$F_1$};
      \node (H) at (2,0) {$H_1$};
      \node at (0.5,0.5) {(6)}; \node at (1.5,0.5) {(3)};
      \path[<-] (K) edge node[fill=white, font=\footnotesize] {$d'_2$} (L);
      \path[->] (L) edge node[fill=white, font=\footnotesize] {$l_2$} (G);
      \path[->] (K) edge node[fill=white, font=\footnotesize] {$e_1$} (D);
      \path[<-] (D) edge node[fill=white, font=\footnotesize] {$j_2$} (G);
      \path[->] (D) edge node[fill=white, font=\footnotesize] {$g_1$} (H);
      \path[->] (K) edge node[fill=white, font=\footnotesize] {$s_1$} (RK);
      \path[->] (RK) edge node[fill=white, font=\footnotesize] {$f'_2$} (H);
      \path[->] (L) edge [bend left] node[fill=white, font=\footnotesize] {$k'_2$} (RK);
      \path[->] (G) edge [bend right] node[fill=white, font=\footnotesize] {$m'_2$} (H);
    \end{tikzpicture}
  \end{center}
  we know that (3) is a pullback and that $(6)+(3)$ is a
  pushout. Besides, $r_1\in\M$ and (1) is a pushout, hence $r'_1\in\M$
  and similarly $g_1\in\M$, hence by the $\M$-POPB decomposition lemma
  (6) and (3) are pushouts. By composition we have that $(6)+(2)$ is a
  pushout, and since $r_2\in \M$ then there exists a
  pushout $\tuple{n_2,g_2, H_2}$ of $\tuple{I_2,e_2\circ d'_2\circ
    i_2, r_2}$. This yields the direct transformation
  $\adt_2\in\dirtranspo{G}{\arule_2}$ depicted below,
  \begin{center}
  \begin{tikzpicture}[xscale=1.65, yscale=1.8]
    \node (L) at (0.5,1) {$L_2$}; \node (K) at (1.5,1) {$K_2$};  \node (I) at
    (2.5,1) {$I_2$};  \node (R) at (3.5,1) {$R_2$}; \node (G) at (0,0) {$G$};
    \node (D) at (2,0) {$D_2$}; \node (H) at (3,0) {$H_2$};
    \path[->] (K) edge node[fill=white, font=\footnotesize] {$l_2$} (L);
    \path[->] (I) edge node[fill=white, font=\footnotesize] {$i_2$} (K);
    \path[->] (I) edge node[fill=white, font=\footnotesize] {$r_2$} (R);
    \path[->] (L) edge node[right, font=\footnotesize, near end] {${m_2}$} (G);
    \path[->] (K) edge node[fill=white, font=\footnotesize] {$k_2$} (D);
    \path[->] (D) edge node[fill=white, font=\footnotesize] {$f_2$} (G);
    \path[->] (I) edge node[fill=white, font=\footnotesize] {$k_2\circ i_2$} (D);
    \path[->] (D) edge node[fill=white, font=\footnotesize] {$g_2$} (H);
    \path[->] (R) edge node[fill=white, font=\footnotesize] {$n_2$} (H);
    \node (L1) at (-0.5,1) {$L_1$}; \node (K1) at (-1.5,1) {$K_1$};
    \node (D1) at (-1.5,0) {$D_1$}; 
    \path[->] (K1) edge node[fill=white, font=\footnotesize] {$l_1$} (L1);
    \path[->] (L1) edge node[left, font=\footnotesize, near end] {$m_1$} (G);
    \path[->] (K1) edge node[fill=white, font=\footnotesize] {$k_1$} (D1);
    \path[->] (D1) edge node[fill=white, font=\footnotesize] {$f_1$} (G);
  \path[-] (L) edge[draw=white, line width=3pt]  (D1);
    \path[->] (L) edge node[fill=white, font=\footnotesize] {$j_2$} (D1); 
  \path[-] (L1) edge[draw=white, line width=3pt]  (D); 
    \path[->] (L1) edge node[fill=white, font=\footnotesize] {$j_1$} (D);
  \end{tikzpicture}    
  \end{center}
  where $k_2= e_2\circ d'_2$ and $m_2= f_1\circ j_2$. By definition of
  $f_2$ we have $f_2\circ j_1=m_1$, hence $\adt_1$ and $\adt_2$ are
  parallel independent.

  Let $j_1^2=j_1\circ l_1\circ i_1$, we have $f_2\circ j_1^2 =
  m_1\circ l_1\circ i_1 = f_1\circ k_1\circ i_1$, and since (2) is a
  pullback there exists a unique $d_1:I_1\rightarrow C$ such that
  $e_1\circ d_1 = k_1\circ i_1$ and $e_2\circ d_1 = j_1^2$, and as
  above we easily see that $d_1 = d'_1\circ i_1$. Let $o_1=j'_1\circ
  i'_1$, in the diagram below
  \begin{center}
        \begin{tikzpicture}[yscale=1.5,xscale=2]
      \node (R1) at (0,0) {$R_1$};
      \node (I1) at (0,1) {$I_1$};
      \node (K1) at (1,1) {$K_1$};
      \node (Rp1) at (1,0) {$R'_1$};
      \node (Dp) at (2,1) {$C$}; 
      \node (Hp1) at (2,0) {$F_1$}; 
      \node (D1) at (3,1) {$D_1$}; 
      \node (H1) at (3,0) {$H_1$}; \node at (0.5,0.5) {(1)}; 
      \node at (1.5,0.5) {(5)}; \node at (2.5,0.5) {(3)}; 
      \path[->] (I1) edge node[fill=white, font=\footnotesize] {$i_1$} (K1);
      \path[->] (I1) edge node[fill=white, font=\footnotesize] {$r_1$} (R1);
      \path[->] (K1) edge node[fill=white, font=\footnotesize] {$r'_1$} (Rp1);
      \path[->] (R1) edge node[fill=white, font=\footnotesize] {$i'_1$} (Rp1);
      \path[->] (K1) edge node[fill=white, font=\footnotesize] {$d'_1$} (Dp);
      \path[->] (Dp) edge node[fill=white, font=\footnotesize] {$s_1$} (Hp1);
      \path[->] (Rp1) edge node[fill=white, font=\footnotesize] {$j'_1$} (Hp1);
      \path[->] (I1) edge [bend left] node[fill=white, font=\footnotesize] {$d_1$} (Dp);
      \path[->] (R1) edge [bend right] node[fill=white, font=\footnotesize] {$o_1$} (Hp1);
      \path[->] (Dp) edge node[fill=white, font=\footnotesize] {$e_1$} (D1);
      \path[->] (D1) edge node[fill=white, font=\footnotesize] {$g_1$} (H1);
      \path[->] (Hp1) edge node[fill=white, font=\footnotesize] {$f'_2$} (H1);
      \path[-] (K1) edge [bend left, draw=white, line width=3pt] (D1);
      \path[-] (Rp1) edge [bend right, draw=white, line width=3pt] (H1);
      \path[->] (K1) edge [bend left] node[fill=white, font=\footnotesize] {$k_1$} (D1);
      \path[->] (Rp1) edge [bend right] node[fill=white, font=\footnotesize] {$n_1$} (H1);
    \end{tikzpicture}
  \end{center}
  we know that (3) is a pullback, $(5)+(3)$ is a pushout and $r'_1,
  f'_2\in \M$, hence by the $\M$-POPB decomposition lemma (5) is a
  pushout, but so is (1), hence by composition $(1)+(5)$ is a pushout.

  Let $j_2^1 = j_2\circ l_2\circ i_2 = e_1\circ d'_2\circ i_2$ (by
  definition of $d'_2$), we have $f_1\circ j_2^1 = f_2\circ e_2\circ
  d'_2\circ i_2 = f_2\circ k_2\circ i_2$, and since (2) is a
  pullback there exists a unique $d_2:I_2\rightarrow C$ such that
  $e_1\circ d_2 = j_2^1$ and $e_2\circ d_2 = k_2\circ i_2$, and as
  above we easily see that $d_2 = d'_2\circ i_2$.

  Since $r_2\in\M$ then there exists $\tuple{F_2,s_2,o_2}$ such that
  the square (8) below is a pushout.
  \begin{center}
    \begin{tikzpicture}[yscale=1.5,xscale=2]
      \node (G) at (0,0) {$R_2$};
      \node (L) at (0,1) {$I_2$};
      \node (K) at (1,1) {$C$};
      \node (D) at (1,0) {$F_2$};
      \node (RK) at (2,1) {$F_1$};
      \node (H) at (2,0) {$H$};
      \node at (0.5,0.5) {(8)}; \node at (1.5,0.5) {(4)};
      \path[<-] (K) edge node[fill=white, font=\footnotesize] {$d_2$} (L);
      \path[->] (L) edge node[fill=white, font=\footnotesize] {$r_2$} (G);
      \path[->] (K) edge node[fill=white, font=\footnotesize] {$s_2$} (D);
      \path[<-] (D) edge node[fill=white, font=\footnotesize] {$o_2$} (G);
      \path[->,dashed] (D) edge node[fill=white, font=\footnotesize] {$h_2$} (H);
      \path[->] (K) edge node[fill=white, font=\footnotesize] {$s_1$} (RK);
      \path[->] (RK) edge node[fill=white, font=\footnotesize] {$h_1$} (H);
      \path[->] (L) edge [bend left] node[fill=white,
      font=\footnotesize] {$k'_2\circ i_2$} (RK);
      \path[->] (G) edge [bend right] node[fill=white, font=\footnotesize] {$n'_2$} (H);
    \end{tikzpicture}
  \end{center}
  We have
  $h_1\circ s_1\circ d_2 = h_1\circ k'_2\circ i_2 = n'_2\circ r_2$ (by
  definition of $d'_2$ and $\adt'_2$), hence there exists a unique
  $h_2:F_2\rightarrow H$ such that $h_2\circ o_2=n'_2$ and
  $h_2\circ s_2 = h_1\circ s_1$. But $(8)+(4)$ is a pushout, hence by
  decomposition (4) is a pushout, which yields a PCT
  $G\trans{\tuple{\adt_1,\adt_2}} H$.

\item Since pushouts and pullbacks are unique up to isomorphism, the
  PCT $G\trans{\tuple{\adt_1,\adt_2}} H$ is determined (up to
  isomorphism) by $\adt_1$, $\adt_2$, $j_1^2$ and $j_2^1$. But we have
  seen that $j_1^2=j_1\circ l_1\circ i_1$ and
  $j_2^1 = j_2\circ l_2\circ i_2$, i.e., they are determined by $j_1$
  and $j_2$ ($\arule_1$ and $\arule_2$ being invariant in the
  considered correspondences). But it is easy to see that these are
  determined by $\adt_1$ and $\adt_2$, for instance if there is a
  $j'_2$ such that $g_1\circ j'_2 = n_2 = g_1\circ j_2$, since
  $g_1\in\M$ is a monomorphism then $j'_2=j_2$. As $\adt_1$ is also
  invariant, the PCT is determined by $\adt_2$. Since
  $e_1\circ d'_2 = j_2\circ l_2$ and $e_1\in\M$, then $d'_2$ is
  determined as well by $\adt_2$.
    
    For the same reasons $\adt_2$ (resp. $\adt'_2$) is determined by
    $k_2$ (resp. $k'_2$). In the first correspondence above we
    associate to $k_2=e_2\circ d'_2$ the morphism
    $k'_2=s_1\circ d'_2$, while in the second one we associate to
    $k'_2=s_1\circ d'_2$ the morphism $k_2=e_2\circ d'_2$. We have
    seen that $e_2$, $s_1$ and $d'_2$ are all determined by $k_2$,
    hence the two correspondences are inverse to each other.
  \end{enumerate} 
\end{proof}

This means that a PCT of $G$ by two parallel independent direct
transformations yields a result that can be obtained by a sequence of
two direct transformations, in any order (they are sequential
independent). This can be interpreted as a result of correctness of
PCTs w.r.t. the standard approach to
(independent) parallelism of algebraic graph transformations. In this
sense, parallel coherence is a conservative extension of parallel
independence.

As mentioned in the abstract, contrary to the standard Parallelism
Theorem \cite{EhrigEPT06} Theorem~\ref{th-parallelism} does not need
to assume that $\aCat$ has coproducts compatible with $\M$, i.e., such
that $f+g\in\M$ whenever $f,g\in\M$.

\section{Derived Rules}

\newcommand{\relobj}{\vdash}

\begin{definition}
  Given a binary relation $\relobj$ beween objects of $\aCat$ and $G$,
  $H$ such that $G\relobj H$, a \emph{derived rule of $G\relobj H$} is
  a span $\aspan$ in $\aCat$ such that $G \trans{\aspan} H$ and for
  all objects $G',H'$ of $\aCat$, if $G' \trans{\aspan} H'$ then
  $G'\relobj H'$.

  For any objects $G$, $H$ and weak-span $\arule$, we write
  $G\ruledir{\arule}H$  if there exists a diagram
  $\adt\in\dirtranspo{G}{\arule}$ such that $G\trans{\adt}H$. We write
  $G\ruletrans{\R} H$ if there
  exists a parallel coherent diagram $\Gamma$ where
  $\adt_1,\dotsc,\adt_p\in \dirtranspo{G}{\R}$ such that
  $G\trans{\Gamma} H$.
\end{definition}

By Lemma \ref{lm-assocspan}, for every $\M$-weak span $\arule$ and every
objects $G,H$ such that $G\ruledir{\arule} H$, the associated span
$\assoc{\arule}$ is a derived rule of $G\ruledir{\arule} H$. The span
$\assoc{\arule}$ does not depend on $G$ or $H$, but this is not
generally true of derived rules.

\begin{lemma}\label{lm-indstep-limit}
  For any $p\geq 2$, any sink $(f_1,\dotsc,f_p,G)$ of domain
  $(D_a)_{a=1}^p$, any limit $(B,g_2,\dotsc,g_p)$ of
  $(f_2,\dotsc,f_p,G)$ and any source $(C,e_1,\dotsc,e_p)$ of codomain
  $(D_a)_{a=1}^p$, then $(C,e_1,\dotsc,e_p)$ is a limit of
  $(f_1,\dotsc,f_p,G)$ iff there exists a morphism $x:C\rightarrow B$
  such that $g_a\circ x=e_a$ for all $2\leq a\leq p$ and the diagram
  \begin{center}
    \begin{tikzpicture}[xscale=-2.5,yscale=1.5]
      \node (B) at (0,0) {$B$}; \node (C) at (0,1){$C$}; \node (G) at
      (1,0) {$G$}; \node (D1) at (1,1) {$D_1$};
      \path[->] (C) edge node[fill=white, font=\footnotesize] {$x$}(B); 
      \path[->] (C) edge node[fill=white, font=\footnotesize] {$e_1$}(D1); 
      \path[->] (D1) edge node[fill=white, font=\footnotesize] {$f_1$}(G); 
      \path[->] (B) edge node[fill=white, font=\footnotesize] {$f_2\circ g_2$}(G); 
    \end{tikzpicture}
  \end{center}
  is a pullback.
\end{lemma}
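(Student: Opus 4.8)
The plan is to recognize this as the standard \emph{iterated limit} statement: the wide pullback of $f_1,\dots,f_p$ over $G$ is obtained by first forming the wide pullback $B$ of $f_2,\dots,f_p$ and then pulling back $f_1$ along $f_2\circ g_2$. Accordingly I would prove the two implications by repeatedly invoking universal properties at these two levels, together with the fact (recalled above) that every limit, in particular $B$, is a mono-source.

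For the \emph{only if} part, assume $(C,e_1,\dots,e_p)$ is a limit of $(f_1,\dots,f_p,G)$. First I would restrict the naturality equalities to the indices $2,\dots,p$, so that $(C,e_2,\dots,e_p)$ is natural for $(f_2,\dots,f_p,G)$ and the universal property of the limit $B$ produces the required $x:C\rightarrow B$ with $g_a\circ x=e_a$ for $2\le a\le p$; commutativity of the square is then immediate, since $f_1\circ e_1=f_2\circ e_2=f_2\circ g_2\circ x$. To see the square is a pullback, I would take an arbitrary competitor $(u:C'\rightarrow B,\ v:C'\rightarrow D_1)$ with $f_2\circ g_2\circ u=f_1\circ v$, assemble from it the source $(v,g_2\circ u,\dots,g_p\circ u)$ on $C'$, check it is natural for $(f_1,\dots,f_p,G)$ (using naturality of $B$ for the indices $\ge 2$ and the competitor equation for the pair involving index $1$), factor it uniquely through the limit $C$ to obtain $w:C'\rightarrow C$, and then recover $x\circ w=u$ from the mono-source property of $B$ (the legs $g_a$ of $B$ distinguish $x\circ w$ from $u$ nowhere). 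Uniqueness of $w$ follows by feeding any rival back through the universal property of $C$.

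For the \emph{if} part, assume $x$ and the pullback square are given. I would first check that $(C,e_1,\dots,e_p)$ is natural for $(f_1,\dots,f_p,G)$, which drops out of naturality of $B$ together with commutativity of the square. Then, given any source $(C',e'_1,\dots,e'_p)$ natural for $(f_1,\dots,f_p,G)$, I would factor $(e'_2,\dots,e'_p)$ through $B$ to get $y:C'\rightarrow B$, observe that $(y,e'_1)$ is a competitor for the pullback square (again by naturality of the primed source, since $f_2\circ g_2\circ y=f_2\circ e'_2=f_1\circ e'_1$), factor through $C$ to obtain $z:C'\rightarrow C$, and verify $e_a\circ z=e'_a$ for all $a$ (the indices $\ge 2$ using $e_a=g_a\circ x$). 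Uniqueness of $z$ comes from the mono-source property of $B$ together with the uniqueness built into the pullback.

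I do not expect a genuine obstacle: everything is a bookkeeping exercise in chasing the two universal properties. The one place that needs a little care is promoting equalities of the form $g_a\circ(x\circ w)=g_a\circ u$ for all $a\ge 2$ to $x\circ w=u$, which is precisely where the fact that the limit $B$ is an (extremal) mono-source is used.
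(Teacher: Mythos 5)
Your proposal is correct and follows essentially the same route as the paper: obtain $x$ from the universal property of the limit $B$, verify the pullback (resp.\ limit) property by factoring the relevant competitor source through $C$, and use the mono-source property of $B$ to recover the equation involving $x$. The only difference is one of detail: the paper dismisses the ``if'' direction as easy and is terse about uniqueness of the mediating morphisms, both of which you spell out explicitly.
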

\begin{proof}
  If part. We assume that $(C,e_1,x)$ is a pullback of
  $(f_1,f_2\circ g_2, G)$,
  \begin{center}
    \begin{tikzpicture}[xscale=1.5,yscale=0.8]
      \node (G) at (0,0) {$G$}; \node (D1) at (1,1) {$D_1$};
      \node (D2) at (1,0) {$D_2$}; \node at (1,-0.9){$\vdots$};
      \node (Dp) at (1,-2){$D_p$}; \node (B) at (2,-1){$B$};
      \node (C) at (3,0){$C$};
      \path[->] (D1) edge node [fill=white, font=\footnotesize]{$f_1$} (G);
      \path[->] (D2) edge node [fill=white, font=\footnotesize]{$f_2$} (G);
      \path[->] (Dp) edge node [fill=white, font=\footnotesize]{$f_p$} (G);
      \path[->] (B) edge node [fill=white, font=\footnotesize]{$g_2$} (D2);
      \path[->] (B) edge node [fill=white, font=\footnotesize]{$g_p$} (Dp);
      \path[->] (C) edge node [fill=white, font=\footnotesize]{$e_1$} (D1);
      \path[->] (C) edge node [fill=white, font=\footnotesize]{$x$} (B);
    \end{tikzpicture}
  \end{center}
  it is then easy to prove that $(C,e_1,g_2\circ x,\dotsc, g_p\circ
  x)$ is a limit of $(f_1,\dotsc, f_p,G)$.

  Only if part. We assume that $(C,e_1,\dotsc,e_p)$ is a limit of
  $(f_1,\dotsc,f_p,G)$,
  \begin{center}
    \begin{tikzpicture}[xscale=1.5,yscale=0.8]
      \node (G) at (0,0) {$G$}; \node (D1) at (1,1) {$D_1$};
      \node (D2) at (1,0) {$D_2$}; \node at (1,-0.9){$\vdots$};
      \node (Dp) at (1,-2){$D_p$}; \node (B) at (2,-1){$B$};
      \node (C) at (3,0){$C$};\node (C') at (4,0) {$C'$};
      \path[->] (D1) edge node [fill=white, font=\footnotesize]{$f_1$} (G);
      \path[->] (D2) edge node [fill=white, font=\footnotesize]{$f_2$} (G);
      \path[->] (Dp) edge node [fill=white, font=\footnotesize]{$f_p$} (G);
      \path[->] (B) edge node [fill=white, font=\footnotesize]{$g_2$} (D2);
      \path[->] (B) edge node [fill=white, font=\footnotesize]{$g_p$} (Dp);
      \path[->] (C) edge node [fill=white, font=\footnotesize]{$e_1$} (D1);
      \path[->] (C) edge node [fill=white, font=\footnotesize]{$e_2$} (D2);
      \path[->] (C) edge [bend left] node [fill=white, font=\footnotesize]{$e_p$} (Dp);
      \path[->, dashed] (C) edge node [fill=white, font=\footnotesize]{$x$} (B);
      \path[-] (C') edge [bend left=20, draw=white, line width = 3pt] (B);
      \path[->] (C') edge [bend left=20] node [near start,fill=white, font=\footnotesize]{$x'$} (B);
      \path[->] (C') edge [bend right=20] node [fill=white, font=\footnotesize]{$e'_1$} (D1);
      \path[->, dashed] (C') edge node [fill=white, font=\footnotesize]{$y$} (C);
    \end{tikzpicture}
  \end{center}
  then $f_2\circ e_2=\dotsb=f_p\circ e_p$, and since
  $(B,g_2,\dotsc,g_p)$ is a limit then $\exists !x:C\rightarrow B$
  such that $e_a=g_a\circ x$ for all $2\leq a\leq p$, and the $g_a$'s
  are monomorphisms. Let $(C',h'1,x')$ such that
  $f_1\circ e'_1=f_2\circ g_2\circ x'$, since
  $f_2\circ g_2=\dotsb=f_p\circ g_p$ then by the hypothesis
  $\exists !y:C'\rightarrow C$ such that $e'_1=e_1\circ y$ and
  $g_a\circ x'=e_a\circ y = g_a\circ x\circ y$ for all
  $2\leq a\leq p$. But $(B,g_2,\dotsc,g_p)$ is a mono-source hence
  $x'=x\circ y$, which proves
  that $(C,e_1,x)$ is a pullback of $(f_1,f_2\circ g_2,G)$. 
\end{proof}

\begin{lemma}\label{lm-indstep-Mlimit}
  For all $p\geq 1$ and sink $(f_1,\dotsc,f_p,G)$ with
  $f_1,\dotsc,f_p\in\M$, there exists a limit $(C,e_1,\dotsc,e_p)$ of
  $(f_1,\dotsc,f_p,G)$ with $e_1,\dotsc,e_p\in\M$.
\end{lemma}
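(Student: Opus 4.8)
The natural approach is induction on $p$, with Lemma~\ref{lm-indstep-limit} carrying the inductive step.

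For $p=1$ the source $\tuple{D_1,\id{D_1}}$ is trivially a limit of $\tuple{f_1,G}$ (any natural source $\tuple{A,h}$ factors through it via the unique morphism $h:A\rightarrow D_1$, the naturality conditions being vacuous), and $\id{D_1}\in\M$ since $\M$ contains all isomorphisms.

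Now suppose $p\geq 2$ and that the claim holds for $p-1$. Given a sink $\tuple{f_1,\dotsc,f_p,G}$ with all $f_a\in\M$, apply the induction hypothesis to $\tuple{f_2,\dotsc,f_p,G}$ to obtain a limit $\tuple{B,g_2,\dotsc,g_p}$ of $\tuple{f_2,\dotsc,f_p,G}$ with $g_2,\dotsc,g_p\in\M$. Then $f_2\circ g_2\in\M$ by closure of $\M$ under composition, so, $\aCat$ having pullbacks along $\M$-morphisms, there is a pullback $\tuple{C,e_1,x}$ of $\tuple{f_1,f_2\circ g_2,G}$, with $e_1:C\rightarrow D_1$ and $x:C\rightarrow B$. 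Since $\M$ is closed under pullbacks, $f_1\in\M$ yields $x\in\M$ and $f_2\circ g_2\in\M$ yields $e_1\in\M$. Setting $e_a\defeq g_a\circ x$ for $2\leq a\leq p$, each $e_a$ lies in $\M$ by closure under composition, and by the ``if'' part of Lemma~\ref{lm-indstep-limit} the source $\tuple{C,e_1,e_2,\dotsc,e_p}$ is a limit of $\tuple{f_1,\dotsc,f_p,G}$. All its components are in $\M$, which completes the induction.

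I expect no serious obstacle: the content of Lemma~\ref{lm-indstep-limit} is exactly that a $p$-ary limit of an $\M$-sink can be built from a $(p-1)$-ary limit and a single pullback square, after which closure of $\M$ under composition and under pullbacks transports $\M$-membership through that square. The one point deserving care is that the pullback used in the inductive step exists at all, which is ensured by $f_2\circ g_2\in\M$.
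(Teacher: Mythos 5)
Your proof is correct and follows essentially the same route as the paper: induction on $p$ with the identity limit as base case, the induction hypothesis supplying a limit $(B,g_2,\dotsc,g_p)$ of the truncated sink, a pullback of $(f_1,f_2\circ g_2,G)$ (which exists since the cospan involves $\M$-morphisms), closure of $\M$ under pullbacks and composition, and the ``if'' part of Lemma~\ref{lm-indstep-limit} to conclude. The only cosmetic difference is that the paper attributes the existence of the pullback to $f_1\in\M$ while you attribute it to $f_2\circ g_2\in\M$; either suffices.
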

\begin{proof}
  By induction on $p$. If $p=1$ then $(D,\id{D})$ is a limit of
  $f_1:D\rightarrow G$, and $\id{D}\in\M$.

  Assume it is true for $p-1\leq 1$, then there exists a limit
  $(B,g_2,\dotsc,g_p)$ of $(f_2,\dotsc,f_p,G)$ with
  $g_2,\dotsc,g_p\in\M$. Since $f_1\in\M$ then there exists a pullback
  $(C,x,e_1)$ of $(f_1,f_2\circ g_2,G)$ and $x\in\M$, so that
  $g_a\circ x\in\M$ for all $2\leq a\leq p$. Similarly
  $f_2\circ g_2\in\M$ hence $e_1\in\M$, and by
  Lemma~\ref{lm-indstep-limit} $(C,e_1, g_2\circ x,\dotsc,g_p\circ x)$
  is a limit of $(f_1,\dotsc,f_p,G)$. 
\end{proof}

\begin{lemma}\label{lm-shift-limit}
  For all $p\geq 1$, sink $(f_1,\dotsc,f_p,G)$ with
  $f_1,\dotsc,f_p\in\M$, limit $(C,e_1,\dotsc,e_p)$ of
  $(f_1,\dotsc,f_p,G)$ with $e_1,\dotsc,e_p\in\M$, morphisms $g:G\rightarrow G'$, $c:C\rightarrow
  C'$ and $t_a,f'_a,e'_a$ below,
  \begin{center}
    \begin{tikzpicture}[xscale=2,yscale=1.5]
      \node (G') at (0,0){$G'$}; \node (G) at (0,1){$G$};
      \path[->] (G) edge node[fill=white, font=\footnotesize] {$g$} (G');
      \node (Da) at (1,1) {$D_a$}; \node (Da') at (1,0){$D'_a$};
      \path[->] (Da) edge node[fill=white, font=\footnotesize] {$f_a$} (G); 
      \path[->] (Da') edge node[fill=white, font=\footnotesize] {$f'_a$} (G'); 
      \path[->] (Da) edge node[fill=white, font=\footnotesize] {$t_a$} (Da'); 
      \node (C) at (2,1){$C$}; \node (C') at (2,0){$C'$};
      \path[->] (C) edge node[fill=white, font=\footnotesize] {$e_a$} (Da); 
      \path[->] (C') edge node[fill=white, font=\footnotesize] {$e'_a$} (Da'); 
      \path[->] (C) edge node[fill=white, font=\footnotesize] {$c$} (C'); 
      \node at (0.5,0.5){$(1_a)$}; \node at (1.5,0.5){$(2_a)$};
    \end{tikzpicture}
  \end{center}
  if $(1_a)$ and $(2_a)$ are pushouts for all $1\leq a\leq p$,
  and the diagram
  \begin{center}
    \begin{tikzpicture}[xscale=2]
      \node (G) at (0,0) {$G'$};
      \node (D1) at (1,1) {$D'_1$};
      \node (Dp) at (1,-1) {$D'_p$};
      \node (C) at (2,0) {$C'$};
      \node at (1,0.1) {$\vdots$};
      \path[->] (D1) edge node[fill=white, font=\footnotesize] {$f'_1$} (G);
      \path[->] (Dp) edge node[fill=white, font=\footnotesize] {$f'_p$} (G);
      \path[->] (C) edge node[fill=white, font=\footnotesize] {$e'_1$} (D1);
      \path[->] (C) edge node[fill=white, font=\footnotesize] {$e'_p$} (Dp);
    \end{tikzpicture}
  \end{center}
  commutes then it is a limit. 
\end{lemma}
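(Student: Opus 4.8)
The strategy is induction on $p$, mirroring the structure of Lemmas \ref{lm-indstep-Mlimit} and \ref{lm-indstep-limit}. The base case $p=1$ is immediate: a limit of a single morphism $f_1:D_1\rightarrow G$ is $(D_1,\id{D_1})$, so $C=D_1$, $e_1=\id{D_1}$, $c=t_1$, and $(2_1)$ being a pushout says $D'_1\cong C'$ via $e'_1$ with $f'_1 = f'_1\circ\id{}$; the claimed commuting span $(C',e'_1)$ is then trivially a limit of $(f'_1,G')$.

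For the inductive step, assume the result for $p-1$. By Lemma \ref{lm-indstep-Mlimit} choose a limit $(B,g_2,\dotsc,g_p)$ of $(f_2,\dotsc,f_p,G)$ with all $g_a\in\M$. Applying the inductive hypothesis to the sink $(f_2,\dotsc,f_p,G)$, the limit $(B,g_2,\dotsc,g_p)$, the morphism $g$, and the composites $e_a$ factored through $B$, I obtain a limit $(B',g'_2,\dotsc,g'_p)$ of $(f'_2,\dotsc,f'_p,G')$ together with a morphism $b:B\rightarrow B'$ making the appropriate squares pushouts — here one must first set up the pushout $(1_B)$ over $G\rightarrow G'$ with apex $B'$ and check that the maps $t_a$ and $e'_a$ induce $g'_a:B'\rightarrow D'_a$ with $(2_a)$-type squares over $B\rightarrow B'$; this is where the $\M$-POPB decomposition lemma and pushout composition/decomposition do the bookkeeping. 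Then by Lemma \ref{lm-indstep-limit} (only-if direction), since $(C,e_1,\dotsc,e_p)$ is a limit, there is $x:C\rightarrow B$ with $g_a\circ x=e_a$ and $(C,e_1,x)$ a pullback of $(f_1,f_2\circ g_2,G)$; moreover all these morphisms lie in $\M$ by Lemma \ref{lm-indstep-Mlimit}'s argument.

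Now I use Lemma \ref{lm-shift-limit} itself recursively? No — rather, the key move is a \emph{cube-POPB} argument: the square $(1_1)$ (pushout over $g$, with $D'_1$) and the analogous square $(1_B)$ (pushout over $g$, with $B'$) together with the pullback $(C,e_1,x)$ over $(f_1,f_2\circ g_2,G)$ and a putative pullback $(C',e'_1,c')$ over $(f'_1,f'_2\circ g'_2,G')$ form the faces of a cube whose horizontal morphisms $e_1,x,e'_1,c'$ (resp. $f_1,f_2\circ g_2,\dotsc$) are in $\M$, whose top is a pullback and whose two left faces are pushouts; the cube POPB lemma then forces the bottom face to be a pullback provided the right faces are pushouts. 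The right faces are exactly the pushout squares $(2_1)$ and the one relating $x,b,c$; so I must verify $c$ factors as the comparison $C\rightarrow C'$ with $x':C'\rightarrow B'$ satisfying $x'\circ c = b\circ x$, and that this square is a pushout — which follows from $M$-POPB decomposition applied to $(2_1)$ composed appropriately, using that the overall composite square over $g$ built from $C$ is a pushout by pushout composition of $(1_a)$ with the lower squares. Having established that $(C',e'_1,c')$ is a pullback of $(f'_1,f'_2\circ g'_2,G')$, I invoke Lemma \ref{lm-indstep-limit} (if-direction) to conclude $(C',e'_1,g'_2\circ c',\dotsc,g'_p\circ c')$ is a limit of $(f'_1,\dotsc,f'_p,G')$; finally I identify $g'_a\circ c'$ with $e'_a$ using the uniqueness from the pushout squares $(2_a)$, so the given commuting diagram is indeed that limit.

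The main obstacle I anticipate is the cube assembly: getting all eight vertices and twelve edges of the cube correctly labelled so that the hypotheses of the cube POPB lemma (horizontals in $\M$, top a pullback, left faces pushouts) are literally met, and in particular confirming the pushout square relating $x$, $b$, $c$ and $x'$. This requires carefully propagating the $\M$-membership of $e_1,x,e'_1$ (from Lemmas \ref{lm-indstep-Mlimit}/\ref{lm-indstep-limit}) and of $b$ (a pushout of an $\M$-morphism), and a few applications of pushout decomposition to peel the big $g$-pushout into the needed faces. Once the cube is correctly set up, the rest is a mechanical chain of the cited lemmas.
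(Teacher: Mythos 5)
Your skeleton is the paper's: induction on $p$; Lemmas~\ref{lm-indstep-Mlimit} and \ref{lm-indstep-limit} to produce a limit $(B,g_2,\dotsc,g_p)$ of $(f_2,\dotsc,f_p,G)$ in $\M$ and a morphism $x:C\rightarrow B$ with $(C,e_1,x)$ a pullback of $(f_1,f_2\circ g_2,G)$; the induction hypothesis to get that $(B',g'_2,\dotsc,g'_p)$ is a limit of $(f'_2,\dotsc,f'_p,G')$; then the cube POPB lemma with exactly the faces you name, and the if-direction of Lemma~\ref{lm-indstep-limit} to conclude. The genuine gap is how $B'$, $b$ and $x'$ come into existence. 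You propose to ``set up the pushout $(1_B)$ over $G\rightarrow G'$ with apex $B'$'', but that square cannot define $B'$: viewed as a pushout it has initial corner $B$ and pushout object $G'$, which is already given, so $B'$ and $b$ are inputs to that square, not outputs of a universal construction. Consistently with this confusion, you then plan to \emph{verify} afterwards that the square relating $x,b,c,x'$ is a pushout, and you single this out as your main worry. The workable order is the reverse: since $f_1\in\M$ and the square from Lemma~\ref{lm-indstep-limit} is a pullback, $x\in\M$; so one \emph{defines} $(x',b,B')$ as the pushout of $(C,x,c)$. Then the square you worry about is a pushout by construction, each $g'_a$ is induced by its universal property from $e'_a\circ c=t_a\circ g_a\circ x$, the squares $(3_a)$ over $b$ are pushouts by decomposition of $(2_a)=(2)+(3_a)$, and the left cube face on the $B,B'$ side is the composite pushout $(1_2)+(3_2)$ --- your ``$(1_B)$'' is a consequence of the construction, not its starting point.

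Two further points are missing from the plan. To apply the induction hypothesis you must also check that the primed diagram over $B'$ commutes, i.e.\ $f'_a\circ g'_a=f'_2\circ g'_2$ for all $a$; the paper obtains this from the epi-sink property of the pushout $(2)$, comparing the two composites after precomposition with $x'$ and with $b$. And the base case is stated too loosely: the limit $(C,e_1)$ of $(f_1,G)$ need not literally be $(D_1,\id{D_1})$; rather $e_1$ is an isomorphism, and one then argues (the paper does it via the extremal epi-sink property of the pushout $(2_1)$) that $e'_1$ is an isomorphism, whence $(C',e'_1)$ is a limit of $(f'_1,G')$. With the construction of $B'$ corrected as above, the remainder of your plan (the cube with top face the pullback, horizontals in $\M$, right faces $(2)$ and $(2_1)$, left faces $(1_1)$ and $(1_2)+(3_2)$, then Lemma~\ref{lm-indstep-limit}) coincides with the paper's argument.
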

\begin{proof}
  Induction on $p$. For $p=1$, since $(D_1,\id{D_1})$ is a limit of
  $(f_1,G)$ then $e_1$ is an isomorphism, hence $e_1\in\M$ and
  $e'_1\in\M$. It is easy to see that $e'_1\circ(\id{C'},c\circ
  \invf{e_1},C') = (e'_1,t_1,D'_1)$, an extremal epi-sink since
  $(2_1)$ is a pushout. Hence $e'_1$ is an isomorphism and
  $(C',e'_1)$ is therefore a limit of $(f'_1,G')$.

  Assume that the property is true for $p-1\geq 1$, let
  $(C,e_1,\dotsc,e_p)$ be a limit of $(f_1,\dotsc,f_p,G)$ with
  $f_a, e_a\in\M$ and pushouts $(1_a)$ and $(2_a)$ such that
  $f'_a\circ e'_a = f'_1\circ e'_1$ for all $1\leq a\leq p$.  By
  Lemma~\ref{lm-indstep-Mlimit} there exists a limit
  $(B,g_2,\dotsc,g_p)$ of $(f_2,\dotsc,f_p,G)$ with
  $g_2,\dotsc,g_p\in\M$. Hence by Lemma~\ref{lm-indstep-limit} there
  exists a morphism $x:C\rightarrow B$ such that $e_a=g_a\circ x$ for
  all $2\leq a\leq p$ and
  \begin{center}
    \begin{tikzpicture}[xscale=-2.5,yscale=1.5]
      \node (B) at (0,0) {$B$}; \node (C) at (0,1){$C$}; \node (G) at
      (1,0) {$G$}; \node (D1) at (1,1) {$D_1$}; \node at (0.5,0.5){(1)};
      \path[->] (C) edge node[fill=white, font=\footnotesize] {$x$}(B); 
      \path[->] (C) edge node[fill=white, font=\footnotesize] {$e_1$}(D1); 
      \path[->] (D1) edge node[fill=white, font=\footnotesize] {$f_1$}(G); 
      \path[->] (B) edge node[fill=white, font=\footnotesize] {$f_2\circ g_2$}(G); 
    \end{tikzpicture}
  \end{center}
  is a pullback. But $f_1\in\M$, hence $x\in\M$ and therefore there is
  a pushout $(x',b,B')$ of $(C,x,c)$ (square (2) below) and
  $x'\in\M$.
  \begin{center}
    \begin{tikzpicture}[xscale=2,yscale=1.5]
      \node (G') at (0,0){$G'$}; \node (G) at (0,1){$G$};
      \path[->] (G) edge node[fill=white, font=\footnotesize] {$g$} (G');
      \node (Da) at (1,1) {$D_a$}; \node (Da') at (1,0){$D'_a$};
      \path[->] (Da) edge node[fill=white, font=\footnotesize] {$f_a$} (G); 
      \path[->] (Da') edge node[fill=white, font=\footnotesize] {$f'_a$} (G'); 
      \path[->] (Da) edge node[fill=white, font=\footnotesize] {$t_a$} (Da'); 
      \node (B) at (2,1){$B$}; \node (B') at (2,0){$B'$};
      \path[->] (B) edge node[fill=white, font=\footnotesize] {$g_a$}(Da);
      \path[->] (B) edge node[fill=white, font=\footnotesize] {$b$}(B');
      \path[->,dashed] (B') edge node[fill=white, font=\footnotesize] {$g'_a$}(Da');
      \node (C) at (3,1){$C$}; \node (C') at (3,0){$C'$};
      \path[->] (C) edge node[fill=white, font=\footnotesize] {$x$}(B);
      \path[->] (C') edge node[fill=white, font=\footnotesize] {$x'$}(B');
      \path[->] (C) edge [bend right] node[fill=white, font=\footnotesize] {$e_a$} (Da); 
      \path[->] (C') edge [bend left] node[fill=white, font=\footnotesize] {$e'_a$} (Da'); 
      \path[->] (C) edge node[fill=white, font=\footnotesize] {$c$} (C'); 
      \node at (0.5,0.5){$(1_a)$}; \node at (1.5,0.5){$(3_a)$}; \node at (2.5,0.5){$(2)$};
    \end{tikzpicture}
  \end{center}
  Since (2) is a pushout and
  $e'_a\circ c=t_1\circ e_a = t_a\circ g_a\circ x$ for all
  $2\leq a\leq p$ then there exists a unique morphism
  $g'_a:B'\rightarrow D'_a$ such that $g'_a\circ x'=e'_a$ and
  $g'_a\circ b=t_a\circ g_a$. We thus have
  $f'_a\circ g'_a\circ x' = f'_a\circ e'_a = f'_2\circ e'_2 =
  f'_2\circ g'_2\circ x'$
  and
  $f'_a\circ g'_a\circ b = g\circ f_a\circ g_a = g\circ f_2\circ g_2 =
  f'_2\circ g'_2\circ b$,
  and since $(x',b,B')$ is an epi-sink then
  $f'_a\circ g'_a = f'_2\circ g'_2$. Besides, since $(2)+(3_a)=(2_a)$
  is a pushout then by decomposition $(3_a)$ is a pushout.  We may
  thus apply the induction hypothesis to the limit
  $(B,g_2,\dotsc,g_p)$ and the pushouts $(1_a)$ and $(3_a)$, which
  yields that $(B',g'_2,\dotsc,g'_p)$ is a limit of
  $(f'_2,\dotsc,f'_p,G')$. 

  We now consider the cube
  \begin{center}
    \begin{tikzpicture}[scale=0.65]
  \cubenodes{G'}{B'}{D'_1}{C'}{G}{B}{D_1}{C};
  \path[<-] (LL) edge node[fill=white, font=\footnotesize] {$f'_2\circ g'_2$} (LF);
  \path[<-] (LF) edge node[fill=white, font=\footnotesize] {$x'$} (LR);
  \path[->] (LR) edge node[fill=white, font=\footnotesize] {$e'_1$} (LB);
  \path[->] (LB) edge node[fill=white, font=\footnotesize] {$f'_1$} (LL);
  \path[<-] (LL) edge node[fill=white, font=\footnotesize] {$g$} (UL); 
  \path[-] (LF) edge [draw=white, line width=3pt] (UF); 
  \path[<-] (LF) edge node[near start,fill=white, font=\footnotesize] {$b$} (UF); 
  \path[<-] (LR) edge node[fill=white, font=\footnotesize] {$c$} (UR); 
  \path[<-] (LB) edge node[near end,fill=white, font=\footnotesize] {$t_1$} (UB); 
  \path[-] (UL) edge [draw=white, line width=3pt] (UF);
  \path[<-] (UL) edge node[fill=white, font=\footnotesize] {$f_2\circ g_2$} (UF);
  \path[<-] (UF) edge node[fill=white, font=\footnotesize] {$x$} (UR);
  \path[->] (UR) edge node[fill=white, font=\footnotesize] {$e_1$} (UB);
  \path[->] (UB) edge node[fill=white, font=\footnotesize] {$f_1$} (UL);
  \end{tikzpicture}
  \end{center}
  It is obvious that all horizontal morphisms are in $\M$. The top
  face is the pullback (1), the vertical faces are the pushouts
  $(1_2)+(3_2)$, $(1_1)$, (2) and $(2_1)$. The bottom face commutes
  since $f'_1\circ e'_1 = f'_2\circ e'_2 = f'_2\circ g'_2\circ x'$,
  hence the cube commutes and we may apply the cube POPB lemma, which
  yields that the bottom face is a pullback. Hence by
  Lemma~\ref{lm-indstep-limit} $(C', e'_1,\dotsc,e'_p)$ is a limit
  of $(f'_1,\dotsc, f'_p, G')$, which completes the induction. 
\end{proof}

Note that this lemma generalizes the if part of the cube POPB lemma to
the case where the top ``face'' is a limit diagram. A similar result
holds for colimits, that is not restricted to adhesive HLR
categories.

\begin{lemma}\label{lm-shift-colimit}
  For all $p\geq 1$, source $(C,s_1,\dotsc,s_p)$, colimit $(h_1,\dotsc,h_p,H)$ of
  $(C,s_1,\dotsc,s_p)$, morphisms $h:H\rightarrow H'$, $c:C\rightarrow
  C'$ and $t'_a,s'_a,h'_a$ below,
  \begin{center}
    \begin{tikzpicture}[xscale=-2,yscale=1.5]
      \node (G') at (0,0){$H'$}; \node (G) at (0,1){$H$};
      \path[->] (G) edge node[fill=white, font=\footnotesize] {$h$} (G');
      \node (Da) at (1,1) {$F_a$}; \node (Da') at (1,0){$F'_a$};
      \path[->] (Da) edge node[fill=white, font=\footnotesize] {$h_a$} (G); 
      \path[->] (Da') edge node[fill=white, font=\footnotesize] {$h'_a$} (G'); 
      \path[->] (Da) edge node[fill=white, font=\footnotesize] {$t'_a$} (Da'); 
      \node (C) at (2,1){$C$}; \node (C') at (2,0){$C'$};
      \path[->] (C) edge node[fill=white, font=\footnotesize] {$s_a$} (Da); 
      \path[->] (C') edge node[fill=white, font=\footnotesize] {$s'_a$} (Da'); 
      \path[->] (C) edge node[fill=white, font=\footnotesize] {$c$} (C'); 
      \node at (0.5,0.5){$(4_a)$}; \node at (1.5,0.5){$(3_a)$};
    \end{tikzpicture}
  \end{center}
  if $(3_a)$ and $(4_a)$ are pushouts for all $1\leq a\leq p$,
  and the diagram
  \begin{center}
    \begin{tikzpicture}[xscale=-2]
      \node (G) at (0,0) {$H'$};
      \node (D1) at (1,1) {$F'_1$};
      \node (Dp) at (1,-1) {$F'_p$};
      \node (C) at (2,0) {$C'$};
      \node at (1,0.1) {$\vdots$};
      \path[->] (D1) edge node[fill=white, font=\footnotesize] {$h'_1$} (G);
      \path[->] (Dp) edge node[fill=white, font=\footnotesize] {$h'_p$} (G);
      \path[->] (C) edge node[fill=white, font=\footnotesize] {$s'_1$} (D1);
      \path[->] (C) edge node[fill=white, font=\footnotesize] {$s'_p$} (Dp);
    \end{tikzpicture}
  \end{center}
  commutes then it is a colimit. 
\end{lemma}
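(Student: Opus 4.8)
The plan is to prove the statement directly from universal properties; the dual of the inductive argument used for Lemma~\ref{lm-shift-limit} is unavailable, since it would rely on the existence of $\M$-colimits of sub-sources (the dual of Lemma~\ref{lm-indstep-Mlimit}) and on the dual cube lemma, which is exactly why the present lemma holds beyond adhesive HLR categories. Write $\bar{s}\defeq h_a\circ s_a$, which is independent of $a$ because $\tuple{h_1,\dotsc,h_p,H}$ is a cocone, and $\bar{s}'\defeq h'_a\circ s'_a$, which is independent of $a$ by the commutativity hypothesis on the displayed cocone into $H'$.

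The key structural step is to compose, by pushout composition, the pushouts $(3_a)$ and $(4_a)$ along the shared edge $t'_a$: placing $(3_a)$ on the left (contributing the edges $C\xrightarrow{s_a}F_a$ and $C\xrightarrow{c}C'$) and $(4_a)$ on the right (contributing $F_a\xrightarrow{h_a}H$ and $H\xrightarrow{h}H'$), the outer rectangle is a pushout with top edge $h_a\circ s_a=\bar{s}$, left edge $c$, right edge $h$ and bottom edge $h'_a\circ s'_a=\bar{s}'$. Taking $a=1$, this exhibits $\tuple{h,\bar{s}',H'}$ as a pushout of $H\xleftarrow{\bar{s}}C\xrightarrow{c}C'$. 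Recognising that the two given families $(3_a)$ and $(4_a)$ assemble in this way --- so that $H'$ is the single pushout $H +_C C'$ along the colimit comparison map $\bar{s}$, uniformly in $a$ --- is the step I expect to be the main obstacle; the rest is a routine chase.

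It remains to verify the colimit universal property of $\tuple{h'_1,\dotsc,h'_p,H'}$ over $\tuple{C',s'_1,\dotsc,s'_p}$. Given a cocone $\tuple{k_1,\dotsc,k_p,K}$ over that source with common composite $\bar{k}\defeq k_a\circ s'_a$, the morphisms $k_a\circ t'_a:F_a\rightarrow K$ form a cocone over $\tuple{C,s_1,\dotsc,s_p}$, since $(3_a)$ gives $(k_a\circ t'_a)\circ s_a=k_a\circ s'_a\circ c=\bar{k}\circ c$ for every $a$; the colimit $H$ then yields a unique $v:H\rightarrow K$ with $v\circ h_a=k_a\circ t'_a$ for all $a$, whence $v\circ\bar{s}=\bar{k}\circ c$. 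Feeding $v$ and $\bar{k}$ into the pushout $\tuple{h,\bar{s}',H'}$ produces a unique $u:H'\rightarrow K$ with $u\circ h=v$ and $u\circ\bar{s}'=\bar{k}$. One checks $u\circ h'_a=k_a$ by precomposing with $t'_a$ and with $s'_a$, which jointly form an epi-sink because $(3_a)$ is a pushout: $u\circ h'_a\circ t'_a=u\circ h\circ h_a=v\circ h_a=k_a\circ t'_a$ using $(4_a)$, and $u\circ h'_a\circ s'_a=u\circ\bar{s}'=\bar{k}=k_a\circ s'_a$. For uniqueness, any $u'$ with $u'\circ h'_a=k_a$ for all $a$ satisfies $u'\circ\bar{s}'=\bar{k}$, and also $u'\circ h=v$ because $(u'\circ h)\circ h_a=u'\circ h'_a\circ t'_a=k_a\circ t'_a$ pins down $v$ by the colimit property of $H$; hence $u'=u$ by the pushout property of $H'$. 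Only pushout composition and the fact recalled above that colimit (in particular pushout) cocones are epi-sinks are used, so the argument holds in an arbitrary category.
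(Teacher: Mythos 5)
Your proof is correct and takes essentially the same route as the paper's: compose the pushouts $(3_a)$ and $(4_a)$, apply the universal property of the colimit $H$ to the sink $(k_a\circ t'_a)_a$, and then use the composed pushout $(3_1)+(4_1)$ to obtain the mediating morphism $H'\rightarrow K$. The only minor differences are that you verify $u\circ h'_a=k_a$ by precomposing with the jointly epic pushout sink $(t'_a,s'_a)$, whereas the paper factors through each pushout $(4_a)$ and identifies the resulting morphism with $x$ by uniqueness, and that you make the uniqueness of the mediating morphism explicit, which the paper leaves implicit.
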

\begin{proof}
  Let $(z_1,\dotsc,z_p,X)$ such that $z_a\circ s'_a = z_1\circ s'_1$
  for all $1\leq a\leq p$. Then
  $z_a\circ t'_a\circ s_a = z_a\circ s'_a\circ c = z_1\circ s'_1 \circ
  c = z_1\circ t'_1\circ s_1$, hence there exists a unique morphism
  $y:H\rightarrow X$ such that $y\circ h_a = z_a\circ t'_a$ for all
  $1\leq a\leq p$. We thus have $z_1\circ s'_1\circ c = y\circ
  h_1\circ s_1$, and since by composition $(3_1)+(4_1)$ is a pushout
  then there exists a unique morphism $x:H'\rightarrow X$ such that
  $z_1\circ s'_1=x\circ h'_1\circ s'_1$ and $y=x\circ h$.

  Since $(4_a)$ is a pushout then $y\circ h_a = z_a\circ t'_a$ yields
  a unique morphism $x_a:H'\rightarrow X$ such that $z_a=x_a\circ h'_a$
  and $y=x_a\circ h$. But then we have $x_a\circ h'_1\circ s'_1 =
  x_a\circ h'_a\circ s'_a = z_a\circ s'_a = z_1\circ s'_1$, hence by
  unicity of $x$ we have $x_a=x$, which yields $x\circ h'_a = z_a$ for
  all $1\leq a\leq p$, which proves that $(h'_1,\dotsc,h'_p,H')$ is a
  colimit of $(C',s'_1,\dotsc,s'_p)$. 
\end{proof}

\begin{theorem}
  For any set $\R$ of $\M$-weak spans and any objects $G$, $H$, then
  any parallel coherent transformation $G\ruletrans{\R}H$ has a
  derived rule. 
\end{theorem}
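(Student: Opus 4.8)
The plan is to take as derived rule the ``ground'' span
$\aspan = G \xleftarrow{\phi} C \xrightarrow{\psi} H$ read off from the
given PCT (Figure~\ref{fig-pct}): here $C$ is the limit object of
$\tuple{f_1,\dotsc,f_p,G}$, $\phi = f_a\circ e_a$ (independent of $a$
since the limit is natural for $\tuple{f_1,\dotsc,f_p,G}$) and
$\psi = h_a\circ s_a$ (independent of $a$ since the colimit is natural
for $\tuple{C,s_1,\dotsc,s_p}$). Two preliminary observations: each
$f_a\in\M$, being a pushout of $l_a\in\M$, and hence $e_a\in\M$, since by
Lemma~\ref{lm-indstep-Mlimit} together with the essential uniqueness of
limits $e_a$ equals an $\M$-limit leg composed with an isomorphism; and
$G\trans{\adt}H$ holds for the obvious $\adt\in\dirtranspo{G}{\aspan}$
obtained with identity match $\id{G}$, pushout complement
$C\xrightarrow{\id{C}}C\xrightarrow{\phi}G$ and result $H$ (both squares
are pushouts along an identity). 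So it remains to prove the universality
clause: for all objects $G',H'$ and all $\adt'\in\dirtranspo{G'}{\aspan}$
with $G'\trans{\adt'}H'$ we have $G'\ruletrans{\R}H'$.

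Write the data of $\adt'$ as a match $m:G\rightarrow G'$ together with
$k':C\rightarrow D'$, $f':D'\rightarrow G'$ and $w':D'\rightarrow H'$, so
that $G'$ is the pushout of $G\xleftarrow{\phi}C\xrightarrow{k'}D'$ and
$H'$ the pushout of $H\xleftarrow{\psi}C\xrightarrow{k'}D'$. The idea is
to transport the whole PCT of $G$ along $m$. For each $a$ let $D'_a$ be
the pushout of $D_a\xleftarrow{e_a}C\xrightarrow{k'}D'$, with legs
$t_a:D_a\rightarrow D'_a$ and $u_a:D'\rightarrow D'_a$; set
$m'_a = m\circ m_a$, $k'_a = t_a\circ k_a$, and let $f'_a:D'_a\rightarrow
G'$ be induced by $m\circ f_a$ and $f'$. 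Since the rectangle
$\tuple{C,G,D',G'}$ is a pushout (it defines $G'$) and so is the left
square $\tuple{C,D_a,D',D'_a}$ (it defines $D'_a$), pushout
decomposition gives that $\tuple{D_a,G,D'_a,G'}$ is a pushout; pasting it
onto the pushout $\tuple{K_a,L_a,D_a,G}$ shows $\tuple{f'_a,m'_a,G'}$ is
a pushout of $\tuple{K_a,l_a,k'_a}$. Completing with a pushout of
$\tuple{I_a,r_a,k'_a\circ i_a}$ (which exists since $r_a\in\M$) yields
$\adt'_a\in\dirtranspo{G'}{\arule_a}$. Putting
$j_a^{\,b} := u_b\circ k'\circ d_a:I_a\rightarrow D'_b$ makes
$\Gamma' = \tuple{\adt'_1,\dotsc,\adt'_p,(j_a^{\,b})}$ a parallel coherent
diagram for $G'$; the required equations reduce to
$k'_a\circ i_a = t_a\circ k_a\circ i_a = t_a\circ e_a\circ d_a =
u_a\circ k'\circ d_a$ and $f'_a\circ u_a = f' = f'_b\circ u_b$.

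Finally one computes the PCT of $G'$ by $\Gamma'$. For the limit, the
squares $\tuple{D_a,G,D'_a,G'}$ and the defining squares of the $D'_a$
are pushouts and the $f_a,e_a$ lie in $\M$, so Lemma~\ref{lm-shift-limit}
applies and gives that $\tuple{D',u_1,\dotsc,u_p}$ is a limit of
$\tuple{f'_1,\dotsc,f'_p,G'}$; thus $C' = D'$ and the induced morphism
$I_a\rightarrow C'$ is $k'\circ d_a$. Each object $F'_a$ of the PCT is
then the pushout of $\tuple{I_a,r_a,k'\circ d_a}$, which by pushout
decomposition is the pushout of $F_a\xleftarrow{s_a}C\xrightarrow{k'}D'$.
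For the colimit, these squares together with the squares
$\tuple{F_a,H,F'_a,H'}$ (again pushouts, by decomposition against the
pushout defining $H'$) let Lemma~\ref{lm-shift-colimit} apply, so that
$\tuple{h'_1,\dotsc,h'_p,H'}$ is a colimit of
$\tuple{D',s'_1,\dotsc,s'_p}$. Hence the PCT of $G'$ by $\Gamma'$ produces
$H'$, i.e.\ $G'\ruletrans{\R}H'$, as required.

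The conceptual content is just that the derived rule should be the ground
span $G\leftarrow C\rightarrow H$ extracted from the PCT, and that
Lemmas~\ref{lm-shift-limit} and~\ref{lm-shift-colimit} are tailor-made to
carry the limit and colimit parts of a PCT along an arbitrary match. The
main effort — and the step most likely to hide a slip — is the
bookkeeping: choosing the right orientation for each application of
pushout composition and decomposition, and checking square by square that
the faces fed to the two shift lemmas really are the pushouts obtained by
pushing the PCT of $G$ forward along $m$.
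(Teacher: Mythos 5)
Your proposal is correct and follows essentially the same route as the paper: the derived rule is the ground span $G\leftarrow C\rightarrow H$ of the PCT, the rewritten objects are transported along the double pushout of $G'$ by taking pushouts of the limit and colimit legs along $c$ (your $k'$), the squares $\tuple{D_a,G,D'_a,G'}$ and $\tuple{F_a,H,F'_a,H'}$ are obtained by pushout decomposition, and Lemmas~\ref{lm-shift-limit} and~\ref{lm-shift-colimit} yield the new limit and colimit, exactly as in the paper. The only (cosmetic) divergences are bookkeeping choices—e.g.\ you build the right squares of the $\adt'_a$ and the $F'_a$ directly as pushouts along $r_a\in\M$ and then decompose, where the paper pushes out $g_a$ and $s_a$ along $t_a$ and $c$—and you spell out $e_a\in\M$ via Lemma~\ref{lm-indstep-Mlimit} and essential uniqueness, which the paper leaves implicit.
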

 \begin{proof}
  Let $\Gamma$ be a parallel coherent diagram for $G$ and $\pi$ be a
  PCT of $G$ by $\Gamma$, as in Figure~\ref{fig-pct}, and $\aspan$ be the
  span $G\xleftarrow{l} C \xrightarrow{r} H$ where $l=f_1\circ e_1$
  and $r=h_1\circ s_1$, so that $G\trans{\aspan}H$. Assume that
  $G'\trans{\aspan}H'$, then there is a diagram
  \begin{center}
      \begin{tikzpicture}[scale=1.5]
        \node (G) at (0,0) {$G'$}; \node (L) at (0,1) {$G$};
        \node (K) at (1,1) {$C$}; \node (D) at (1,0) {$C'$};
        \node (RK) at (2,1) {$H$}; \node (H) at (2,0) {$H'$}; 
        \node at (0.5,0.5) {(1)}; \node at (1.5,0.5) {(2)};
        \path[->] (K) edge node[fill=white, font=\footnotesize] {$l$} (L);
        \path[->] (L) edge node[fill=white, font=\footnotesize] {$g$} (G);
        \path[->] (K) edge node[fill=white, font=\footnotesize] {$c$} (D);
        \path[->] (D) edge node[fill=white, font=\footnotesize] {$f$} (G);
        \path[->] (D) edge node[fill=white, font=\footnotesize] {$g$} (H);
        \path[->] (K) edge node[fill=white, font=\footnotesize] {$r$} (RK);
        \path[->] (RK) edge node[fill=white, font=\footnotesize] {$h$} (H);
      \end{tikzpicture}
  \end{center}
  where (1) and (2) are pushouts. Obviously $f_a,e_a,s_a,h_a\in\M$ for
  all $1\leq a\leq p$, hence there exists a pushout $(e'_a, t_a,D'_a)$
  of $(C,e_a,c)$, the square $(2_a)$ below
  \begin{center}
    \begin{tikzpicture}[xscale=2,yscale=1.5]
      \node (G') at (0,0){$G'$}; \node (G) at (0,1){$G$};
      \path[->] (G) edge node[fill=white, font=\footnotesize] {$g$} (G');
      \node (Da) at (1,1) {$D_a$}; \node (Da') at (1,0){$D'_a$};
      \path[->] (Da) edge node[fill=white, font=\footnotesize] {$f_a$} (G); 
      \path[->,dashed] (Da') edge node[fill=white, font=\footnotesize] {$f'_a$} (G'); 
      \path[->] (Da) edge node[fill=white, font=\footnotesize] {$t_a$} (Da'); 
      \node (C) at (2,1){$C$}; \node (C') at (2,0){$C'$};
      \path[->] (C) edge node[fill=white, font=\footnotesize] {$e_a$} (Da); 
      \path[->] (C') edge node[fill=white, font=\footnotesize] {$e'_a$} (Da'); 
      \path[->] (C) edge node[fill=white, font=\footnotesize] {$c$} (C'); 
      \path[->,bend right] (C) edge node[fill=white, font=\footnotesize] {$l$} (G); 
      \path[->,bend left] (C') edge node[fill=white, font=\footnotesize] {$f$} (G'); 
      \node at (0.5,0.5){$(1_a)$}; \node at (1.5,0.5){$(2_a)$};
    \end{tikzpicture}
  \end{center}
  and since (1) commutes then there is a unique morphism
  $f'_a:D'_a\rightarrow G'$ such that $f'_a\circ e'_a = f$ and
  $f'_a\circ t_a=g\circ f_a$, and by decomposition $(1_a)$ is a
  pushout. Hence by Lemma~\ref{lm-shift-limit} $(C',e'_1,\dotsc,e'_p)$
  is a limit of $(f'_1,\dotsc,f'_p,G')$.

  Similarly, there exists a pushout $(s'_a, t'_a,F'_a)$
  of $(C,s_a,c)$, the square $(3_a)$ below
  \begin{center}
    \begin{tikzpicture}[xscale=-2,yscale=1.5]
      \node (G') at (0,0){$H'$}; \node (G) at (0,1){$H$};
      \path[->] (G) edge node[fill=white, font=\footnotesize] {$h$} (G');
      \node (Da) at (1,1) {$F_a$}; \node (Da') at (1,0){$F'_a$};
      \path[->] (Da) edge node[fill=white, font=\footnotesize] {$h_a$} (G); 
      \path[->,dashed] (Da') edge node[fill=white, font=\footnotesize] {$h'_a$} (G'); 
      \path[->] (Da) edge node[fill=white, font=\footnotesize] {$t'_a$} (Da'); 
      \node (C) at (2,1){$C$}; \node (C') at (2,0){$C'$};
      \path[->] (C) edge node[fill=white, font=\footnotesize] {$s_a$} (Da); 
      \path[->] (C') edge node[fill=white, font=\footnotesize] {$s'_a$} (Da'); 
      \path[->] (C) edge node[fill=white, font=\footnotesize] {$c$} (C'); 
      \path[->,bend right] (C) edge node[fill=white, font=\footnotesize] {$r$} (G); 
      \path[->,bend left] (C') edge node[fill=white, font=\footnotesize] {$g$} (G'); 
      \node at (0.5,0.5){$(4_a)$}; \node at (1.5,0.5){$(3_a)$};
    \end{tikzpicture}
  \end{center}
  and since (2) commutes then there is a unique morphism
  $h'_a:F'_a\rightarrow H'$ such that $h'_a\circ s'_a = g$ and
  $h'_a\circ t'_a=h\circ h_a$, and by decomposition $(4_a)$ is a
  pushout. Hence by Lemma~\ref{lm-shift-colimit} $(h'_1,\dotsc,h'_p,H')$ is a
  colimit of $(C',s'_1,\dotsc,s'_p)$. 

  By hypothesis for all $1\leq a\leq p$  there is a
  $\adt_a\in\dirtranspo{G}{\arule_a}$ for some $r_a\in\R$, pictured below.
  \begin{center}
    \begin{tikzpicture}[xscale=2, yscale=1.5]
      \node (L) at (0,1) {$L_a$}; \node (K) at (1,1) {$K_a$};  \node (I) at
      (2,1) {$I_a$};  \node (R) at (3,1) {$R_a$}; \node (G) at (0.5,0) {$G$};
      \node (D) at (1.5,0) {$D_a$}; \node (H) at (2.5,0) {$H_a$};
      \node at (0.75,0.5) {\PO}; \node at (2.25,0.5) {\PO};
      \node at (-0.5,0.5) {$(\adt_a)$}; \node (G') at (0.5,-1) {$G'$};
      \node (D') at (1.5,-1) {$D'_a$}; \node (H') at (2.5,-1) {$H'_a$};
      \node at (1,-0.5) {$(1_a)$}; \node at (2,-0.5) {\PO};
      \path[->] (K) edge node[fill=white, font=\footnotesize] {$l_a$} (L);
      \path[->] (I) edge node[fill=white, font=\footnotesize] {$i_a$} (K);
      \path[->] (I) edge node[fill=white, font=\footnotesize] {$r_a$} (R);
      \path[->] (L) edge node[fill=white, font=\footnotesize] {$m_a$} (G);
      \path[->] (K) edge node[fill=white, font=\footnotesize] {$k_a$} (D);
      \path[->] (D) edge node[fill=white, font=\footnotesize] {$f_a$} (G);
      \path[->] (I) edge node[fill=white, font=\footnotesize] {$k_a\circ i_a$} (D);
      \path[->] (D) edge node[fill=white, font=\footnotesize] {$g_a$} (H);
      \path[->] (R) edge node[fill=white, font=\footnotesize] {$n_a$} (H);
      \path[->] (G) edge node[fill=white, font=\footnotesize] {$g$} (G');
      \path[->] (D) edge node[fill=white, font=\footnotesize] {$t_a$} (D');
      \path[->] (H) edge node[fill=white, font=\footnotesize] {$n'_a$} (H');
      \path[->] (D') edge node[fill=white, font=\footnotesize] {$f'_a$} (G');
      \path[->] (D') edge node[fill=white, font=\footnotesize] {$g'_a$} (H');
      \path[->,bend right=25] (L) edge node[fill=white, font=\footnotesize] {$g\circ m_a$} (G');      
      \path[->,bend left=25] (R) edge node[fill=white, font=\footnotesize] {$n'_a\circ n_a$} (H');      
      \node at (3.8,0) {$(\adt'_a)$};
    \end{tikzpicture}
  \end{center}
  Since $r_a\in\M$ then $g_a\in\M$ hence there is a pushout
  $(g'_a,n'_a,H'_a)$ of $(D_a,g_a,t_a)$, hence by pushout
  composition there is an obvious $\adt'_a\in\dirtranspo{G'}{\arule_a}$.

  Since $\Gamma$ is a parallel coherent diagram there
  exist morphisms $j_a^b: I_a\rightarrow D_b$ such that
  $f_b\circ j_a^b = f_1\circ j_a^1$ for all integers
  $1\leq a,b\leq p$. Let
  $j_a^{\prime b} = t_b\circ j_a^b: I_a\rightarrow D'_b$, then
  \[f'_b\circ j_a^{\prime b} = g\circ f_b\circ j_a^b = g\circ f_1\circ j_a^1 =
    f'_1\circ j_a^{\prime 1},\] hence the diagram $\Gamma'$ constituted
  of the diagrams $\adt'_1,\dotsc,\adt'_p$ and the morphisms
  $j_a^{\prime b}$ is a parallel coherent diagram. By the property of
  $C'$ there exists a unique morphism
  $d'_a:I_a\rightarrow C'$ such that $j_a^{\prime b} = e'_b\circ
  d'_a$. But from $\pi$ we have morphisms $d_a:I_a\rightarrow C$ such
  that $j_a^{b} = e_b\circ d_a$, and we see that $e'_b\circ c\circ d_a
  = t_b\circ e_b\circ d_a = t_b\circ j_a^b = j_a^{\prime b}$, hence by
  unicity $d'_a = c\circ d_a$.

  Finally, we see by pushout composition that
  $(s'_a,t'_a\circ o_a, F'_a)$ is a pushout of $(I_a, r_a, d'_a)$,
  and hence that the diagram
  \begin{center}
          \begin{tikzpicture}[xscale=2, yscale=0.9]
  \node (G) at (0,0) {$G'$};
  \node (D) at (2,0) {$C'$};
  \node (H) at (4,0) {$H'$};
  \node at (1,0.1) {$\vdots$};
  \node at (3,0.1) {$\vdots$};
  \node (L1) at (0,2) {$L_1$};
  \node (K1) at (1,3) {$K_1$};
  \node (D1) at (1,1) {$D'_1$};
  \node (I1) at (2,3) {$I_1$};
  \node (R1) at (3,4) {$R_1$};
  \node (H1) at (3,1) {$F'_1$};
  \node (Ln) at (0,-2) {$L_p$};
  \node (Kn) at (1,-3) {$K_p$};
  \node (Dn) at (1,-1) {$D'_p$};
  \node (In) at (2,-3) {$I_p$};
  \node (Rn) at (3,-4) {$R_p$};
  \node (Hn) at (3,-1) {$F'_p$};
  \path[->] (K1) edge node[fill=white, font=\footnotesize] {$l_1$} (L1) ;
  \path[->] (L1) edge node[fill=white, font=\footnotesize] {$g\circ m_1$} (G);
  \path[->] (K1) edge node[fill=white, font=\footnotesize] {$t_1\circ k_1$} (D1);
  \path[->] (D1) edge node[fill=white, font=\footnotesize] {$f'_1$} (G);
  \path[->] (I1) edge node[fill=white, font=\footnotesize] {$i_1$} (K1);
  \path[->] (I1) edge node[fill=white, font=\footnotesize] {$r_1$} (R1);
  \path[->] (R1) edge node[fill=white, font=\footnotesize] {$t'_1\circ o_1$} (H1);
  \path[->] (D) edge node[fill=white, font=\footnotesize, near start] {$e'_1$} (D1);
  \path[->] (D) edge node[fill=white, font=\footnotesize] {$s'_1$} (H1);
  \path[->] (H1) edge node[fill=white, font=\footnotesize] {$h'_1$} (H);
  \path[->,dashed] (I1) edge node[fill=white, font=\footnotesize] {$d'_1$} (D);
  \path[->] (Kn) edge node[fill=white, font=\footnotesize] {$l_p$} (Ln) ;
  \path[->] (Ln) edge node[fill=white, font=\footnotesize] {$g\circ m_p$} (G);
  \path[->] (Kn) edge node[fill=white, font=\footnotesize] {$t_p\circ k_p$} (Dn);
  \path[->] (Dn) edge node[fill=white, font=\footnotesize] {$f'_p$} (G);
  \path[->] (In) edge node[fill=white, font=\footnotesize] {$i_p$} (Kn);
  \path[->] (In) edge node[fill=white, font=\footnotesize] {$r_p$} (Rn);
  \path[->] (Rn) edge node[fill=white, font=\footnotesize] {$t'_p\circ o_p$} (Hn);
  \path[->] (D) edge node[fill=white, font=\footnotesize, near start] {$e'_p$} (Dn);
  \path[->] (D) edge node[fill=white, font=\footnotesize] {$s'_p$} (Hn);
  \path[->] (Hn) edge node[fill=white, font=\footnotesize] {$h'_p$} (H);
  \path[->,dashed] (In) edge node[fill=white, font=\footnotesize] {$d'_p$} (D);
  \path[->] (I1) edge node[fill=white, font=\footnotesize] {$j^{\prime 1}_1$} (D1);
  \path[->] (In) edge node[fill=white, font=\footnotesize] {$j^{\prime p}_p$} (Dn);
  \path[-] (In) edge[draw=white, line width=3pt]  (D1);
  \path[->] (In) edge node[fill=white, font=\footnotesize, near start] {$j_p^{\prime 1}$} (D1);
  \path[-] (I1) edge[draw=white, line width=3pt]  (Dn);
  \path[->] (I1) edge node[fill=white, font=\footnotesize, near start] {$j^{\prime p}_1$} (Dn);
\end{tikzpicture}
  \end{center}
  is a PCT of $G'$ by $\Gamma'$. This proves that
  $G'\trans{\Gamma'}H'$ and hence that $G'\ruletrans{\R}H'$.  
\end{proof}


\end{document}